\numberwithin{equation}{section}
\newcommand{\R}{\mathbb{R}}
\newcommand{\N}{\mathbb{N}}
\newcommand{\E}{\mathbb{E}}
\renewcommand{\P}{\mathbb{P}}
\newcommand{\diff}{{\,\rm{d}}}
\newcommand{\F}{\mathcal{F}}
\newtheorem{theorem}{Theorem}[section]
\newtheorem{definition}[theorem]{Definition}
\newtheorem{lemma}[theorem]{Lemma}
\newtheorem{proposition}[theorem]{Proposition}
\newtheorem{assumption}[theorem]{Assumption}
\begin{document}
\title{Large deviations principle for stochastic delay differential equations with super-linearly growing coefficients\footnotemark[1]}
\author{
Diancong Jin$\,^\text{1,2}$,
Ziheng Chen$\,^\text{3,}$\footnotemark[2],
Tau Zhou$\,^\text{4,5}$
\\\footnotesize 1. School of Mathematics and Statistics, Huazhong University of Science and Technology, Wuhan 430074, China
\\\footnotesize 2. Hubei Key Laboratory of Engineering Modeling and Scientific Computing,
\\\footnotesize Huazhong University of Science and Technology, Wuhan 430074, China
\\\footnotesize 3. School of Mathematics and Statistics, Yunnan University, Kunming 650500, China
\\\footnotesize 4. Academy of Mathematics and Systems Science, Chinese Academy of Sciences, Beijing 100190, China
\\\footnotesize 5. School of Mathematical Sciences, University of Chinese Academy of Sciences, Beijing 100049, China
}

\date{}
\maketitle
\footnotetext{\footnotemark[1] This work was supported by National Natural Science Foundation of China (Nos. 11971488, 11971470, 12031020, 12026428 and 12171047), the National key R\&D Program of China under Grant NO. 2020YFA0713701 and the Fundamental Research Funds for the Central Universities 3004011142.}
%

\footnotetext{\footnotemark[2]
Corresponding author: Ziheng Chen.
Email addresses: diancongjin@lsec.cc.ac.cn(D. Jin),
czh@ynu.edu.cn(Z. Chen),
zt@lsec.cc.ac.cn(T. Zhou).}

\begin{abstract}
      We utilize the weak convergence method to establish the Freidlin--Wentzell large deviations principle (LDP) for stochastic delay differential equations (SDDEs) with super-linearly growing coefficients, which covers a large class of cases with non-globally Lipschitz coefficients. The key ingredient in our proof is the uniform moment estimate of the controlled equation, where we handle the super-linear growth of the coefficients by an iterative argument. Our results allow both the drift and diffusion coefficients of the considered equations to super-linearly grow not only with respect to the delay variable but also to the state variable. This work extends the existing results which develop the LDPs for SDDEs with super-linearly growing coefficients only with respect to the delay variable.

      \textbf{AMS subject classification: }
             {\rm\small 60H10, 60F10, 60H30}\\

      \textbf{Key Words: }{\rm\small large deviations principle, stochastic delay differential equations, super-linear growth, weak convergence method}
\end{abstract}

\section{Introduction}\label{sec:introduction}
The theory of large deviations is one of the most active topics in probability and statistics, which deals with the asymptotics of small probabilities on an exponential scale. It has extensive applications in communication networks, information theory, statistical mechanics, queueing systems and so on
(see, e.g., \cite{dembo2009large,chen2010random,
chen2021asymptotically,chen2020large,hong2020numerically,
chen2021large,hong2021numerical} and references therein).
As an important part of the theory of large deviations, the LDP for stochastic differential equations (SDEs) with small noise, also called the Freidlin--Wentzell LDP, has received much attention in recent years. It characterizes the probabilities that the pathways of SDEs deviate from the pathways of their corresponding deterministic equations when the intensity of the noises tends to zero.
The Freidlin--Wentzell  LDP for SDEs originates from the seminal work \cite{freidlin1984random} by Freidlin and Wentzell, and has been extensively studied
(see, e.g., \cite{scheutzow1984qualitative,
budhiraja2000variational,mohammed2006large,mo2013large,
chiarini2014large,bao2015large,
suo2020large} and references therein).

Our main interest in the present paper is to develop
the LDP for SDDEs with small noises. SDDEs or general stochastic systems with memory describe the stochastic processes whose behavior depends not only on their present state but also on their past history. Systems of such type are widely used to model processes in physics, economy, finance, biology, medicine, etc. For the cases of Lipschitz continuous and linearly growing coefficients,
\cite{scheutzow1984qualitative} studied the LDP for SDDEs with additive noise  in 1984. To handle the case of multiplicative noise, \cite{mohammed2006large} used the classical discretization method to develop the LDP for SDDEs driven by
small multiplicative noise. Subsequently, both \cite{mo2013large} and \cite{chiarini2014large} employed the weak convergence method to establish  the LDP for SDDEs with small multiplicative noise, under certain mild conditions.
Taking into account that most of the models of applicable interest have super-linear growth coefficients, the linear growth condition on the coefficients becomes a significant limitation.
Recently, \cite{bao2015large} and \cite{suo2020large} obtained the LDPs for SDDEs with constant delay and general delay, respectively, both of which allow the coefficients to grow super-linearly with respect to the delay variable.
However, to the best of our knowledge, there are not any results about the LDP for SDDEs with super-linearly growing coefficients with respect to both the state variable and the delay variable (e.g., the stochastic delay power logistic model in \cite{mao2005khasminskii}).
This motivates us to make a contribution to this problem.

This work focuses on the following non-autonomous SDDE
\begin{equation}\label{eq:SDDE}
\begin{split}
      \diff{X^{\varepsilon}}(t)
      =&~
      b(t,X^{\varepsilon}(t),X^{\varepsilon}(t-\tau))\diff{t}
      +
      \sqrt{\varepsilon}
      \sigma(t,X^{\varepsilon}(t),X^{\varepsilon}(t-\tau))
      \diff{W(t)},
      \quad t \in (0,T],
      \\
      X^{\varepsilon}(t) =&~ \phi(t),
      \quad t \in [-\tau,0],
\end{split}
\end{equation}
where the small parameter $\varepsilon > 0$ denotes the intensity of the noise and $\phi \in C([-\tau,0];\R^{d})$ with $\tau > 0$.
Moreover, $\{W(t)\}_{t \in [0,T]}$ is an $m$-dimensional standard Brownian motion on the complete filtered probability space $(\Omega,\F,\{\F_{t}\}_{t \in [0,T]},\P)$, where $\{\F_{t}\}_{t \in [0,T]}$ satisfies the usual conditions.
Here, the measurable functions $b \colon [0,T] \times \R^{d} \times \R^{d} \to \R^{d}$ and $\sigma \colon [0,T] \times \R^{d} \times \R^{d} \to \R^{d \times m}$ satisfy a globally monotone condition and a polynomial growth condition (see Assumption \ref{eq:mainass}), which allow these two functions to grow super-linearly with respect to both the state variable and the delay variable. One classical approach to establishing the  Freidlin--Wentzell LDPs for stochastic systems 
is by means of the time discretization argument and the contraction principle, which requires certain subtle exponential estimates of small probabilities; see, e.g., \cite{freidlin1984random} for more details.  Another approach to deriving the LDPs for stochastic systems is the weak convergence method, introduced by Dupuis and Ellis in \cite{dupuis1997weak}.
The weak convergence method, based on the variational representation of exponential functionals of Brownian motions (see, e.g., \cite{boue1998variational,budhiraja2000variational,
budhiraja2008large}), is used to validate the Laplace principle which is equivalent to the LDP if the underlying space is Polish.
One  merit of the weak convergence method is that the LDPs can be derived under weaker assumptions on the coefficients of the considered stochastic systems, compared with the discretization argument.

Here we obtain the LDP of \eqref{eq:SDDE} by means of the weak convergence method.
The key ingredient of the proof lies in the qualitative properties of the controlled equation \eqref{eq:Yvaruvart} and the skeleton equation \eqref{eq:skeletoneq}, including the tightness for the family of distributions of solutions to the controlled equations
and the continuity of the solution mapping of the skeleton equation.
The main difficulty we are faced with is the uniform moment estimate of the solution to the controlled equation.
Due to the super-linear growth of the coefficients and the low regularity of the controlled process, the usual methods used to derive the moment estimate of the original equation are not applicable.
We present that this difficulty can be overcome by taking advantage of a stochastic Gronwall lemma and an iterative argument.
With this preparation, we derive the Laplace principle, actually the LDP, of the SDDE \eqref{eq:SDDE} via verifying a  criterion of LDP proposed by \cite{budhiraja2000variational}. These results remove the restriction that the coefficients are only allowed to super-linearly grow with respect to the delay variable in the existing literature (see, e.g., \cite{bao2015large,suo2020large}).

The rest of this paper is organized as follows. Section \ref{sec:preliminary} contains some basic knowledge relevant to the theory of large deviations. In Section \ref{sec:result}, we establish the LDP for SDDEs with super-linearly growing coefficients in Theorem \ref{thm:ldpresult}, whose proof is postponed in Section \ref{sec:proof}.

\section{Preliminaries}\label{sec:preliminary}
In this section we give some standard definitions and results from the theory of large deviations. To this end, we begin with some notations. Let $\langle \cdot,\cdot\rangle$ and $|\cdot|$ be the Euclidean inner product and the corresponding norm in $\R^{d}$, respectively.
If $A$ is a vector or a matrix, its transpose is denoted by $A^{*}$,
and its trace norm is $|A| := \sqrt{\rm{trace}(A^{*}A)}$.
For any $a,b \in \R$, let $a \vee b := \max\{a,b\}$ and $a \wedge b := \min\{a,b\}$.
For any $[a,b] \subset \R$, let $L^{2}([a,b];\R^{d})$ stand for the space of all square integrable functions from $[a,b]$ to $\R^{d}$, and let $C([a,b];\R^{d})$ the space of continuous functions $f \colon [a,b] \to \R^{d}$, equipped with the supremum norm $|\cdot|_{C([a,b];\R^{d})}$.
For any $c \in (a,b)$ and any $\phi \in C([a,c];\R^{d})$, we set the Banach space $C_{\phi}([a,b];\R^{d}) := \{f \in C([a,b];\R^{d}) : f(t) = \phi(t), t \in [a,c]\}$ with the supremum norm denoted by $|\cdot|_{C_{\phi}([a,b];\R^{d})}$.
For two random variables $X$ and $Y$, we use the notation $X \overset{d}{=} Y$ to denote that they are identically distributed.
The notation $\xrightarrow[\varepsilon \to 0]{d}$ means the convergence in distribution for a family of random variables as $\varepsilon$ tends to $0$. By convention, the infimum of an empty set is interpreted as $+\infty$. For simplicity, denote by $C(a_{1},\cdots,a_{m})$ a generic positive constant depending on parameters $a_{1},\cdots,a_{m}$ that may vary for each appearance.

Let $\varepsilon > 0$ be an index parameter and $\{Z^{\varepsilon}\}_{\varepsilon > 0}$ a family of random variables from the probability space $(\Omega,\F,\P)$ to a Polish space (i.e., a complete separable metric space) $\mathcal{X}$. The following are the definitions of the rate functions and LDP; see, e.g., \cite{dupuis1997weak}.

\begin{definition}
      A function $I \colon \mathcal{X} \to [0,+\infty]$ is said to be a rate function, if for each $\alpha \in [0,+\infty)$, the level set
      $\{x \in \mathcal{X} : I(x) \leq \alpha\}$
      is a compact subset of $\mathcal{X}$.
\end{definition}

\begin{definition}
      Let $I$ be a rate function on $\mathcal{X}$.
      The family $\{Z^{\varepsilon}\}_{\varepsilon > 0}$ is said to satisfy the LDP on $\mathcal{X}$ with rate function $I$ if the following two conditions hold.
      \begin{enumerate}
        \item [(a)] Large deviation upper bound. For each closed subset $F$ of $\mathcal{X}$,
            $$\varlimsup_{\varepsilon \to 0}\varepsilon
            \log\P(Z^{\varepsilon} \in F)
            \leq
            -\inf_{x \in F}I(x).$$

        \item [(b)] Large deviation lower bound. For each open subset $G$ of $\mathcal{X}$,
            $$\varliminf_{\varepsilon \to 0}\varepsilon
            \log\P(Z^{\varepsilon} \in G)
            \geq
            -\inf_{x \in G}I(x).$$
      \end{enumerate}
\end{definition}

One classical result is that the LDP is equivalent to the Laplace principle; see, e.g., \cite[Theorems 1.2.1 and 1.2.3]{dupuis1997weak}.

\begin{definition}
      Let $I$ be a rate function on $\mathcal{X}$.
      The family $\{Z^{\varepsilon}\}_{\varepsilon > 0}$ is said to satisfy the Laplace principle on $\mathcal{X}$ with rate function $I$ if for all bounded continuous functions $h \colon \mathcal{X} \to \R$,
      \begin{equation*}
            \lim_{\varepsilon \to 0}\varepsilon\log
            \E\big[\exp\big(-\tfrac{h(Z^{\varepsilon})}
            {\varepsilon}\big)\big]
            =
            -\inf_{x \in \mathcal{X}}\{h(x) + I(x)\}.
      \end{equation*}
\end{definition}

\begin{proposition}\label{pro:ldplp}
      The family $\{Z^{\varepsilon}\}_{\varepsilon > 0}$ satisfies the LDP on $\mathcal{X}$ with rate function $I$ if and only if $\{Z^{\varepsilon}\}_{\varepsilon > 0}$ satisfies the Laplace principle on $\mathcal{X}$ with the same rate function $I$.
\end{proposition}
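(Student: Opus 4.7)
The plan is to establish the two directions separately, using well-known arguments in the spirit of Varadhan's lemma (LDP $\Rightarrow$ Laplace) and Bryc's inverse lemma (Laplace $\Rightarrow$ LDP). Since $I$ is a rate function in both directions (its level sets are compact by assumption), the compactness will be used repeatedly when passing between infima and exponential integrals. Throughout, I work on the Polish space $\mathcal{X}$, where bounded continuous functions separate closed sets from points in their complements via distance functions.

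For the direction LDP $\Rightarrow$ Laplace, fix a bounded continuous $h \colon \mathcal{X} \to \R$ with, say, $\|h\|_{\infty} \le M$. For the \emph{upper bound} on $\varepsilon \log \E[\exp(-h(Z^{\varepsilon})/\varepsilon)]$, I partition $[-M,M]$ into finitely many intervals of length at most $\delta$ and write the expectation as a sum over preimages of these intervals. On each preimage, which is closed by continuity of $h$, I apply the large deviation upper bound to control the probability, then take $\varepsilon \to 0$ followed by $\delta \to 0$ to obtain
\[
\varlimsup_{\varepsilon \to 0}\varepsilon\log\E\bigl[\exp(-h(Z^{\varepsilon})/\varepsilon)\bigr]
\le -\inf_{x \in \mathcal{X}}\{h(x)+I(x)\}.
\]
For the \emph{lower bound}, I fix an arbitrary $x_{0} \in \mathcal{X}$, pick a small open ball $B(x_{0},r)$ on which $h$ differs from $h(x_{0})$ by at most $\eta$, and bound the expectation below by $\exp(-(h(x_{0})+\eta)/\varepsilon)\,\P(Z^{\varepsilon} \in B(x_{0},r))$. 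Applying the large deviation lower bound to $B(x_{0},r)$ and letting $r,\eta \to 0$ yields $-h(x_{0})-I(x_{0})$ as a lower bound, and optimizing over $x_{0}$ closes the direction.

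For the converse Laplace $\Rightarrow$ LDP, I must first \emph{define} $I$ consistently. Given the Laplace principle with rate function $I$, I check the upper and lower bounds using test functions tailored to each set. For an open $G$ and any $x_{0} \in G$, I choose $h_{j}$ bounded continuous with $h_{j}(x_{0}) = 0$ and $h_{j} \equiv j$ off $G$; then
\[
\varepsilon\log\E\bigl[\exp(-h_{j}(Z^{\varepsilon})/\varepsilon)\bigr]
\le \max\Bigl\{\varepsilon\log\P(Z^{\varepsilon}\in G),\,-j+\varepsilon\log\P(Z^{\varepsilon}\in G^{c})\Bigr\},
\]
and sending $\varepsilon \to 0$ through the Laplace limit, then $j \to \infty$, gives the open lower bound. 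For a closed $F$, I choose $h_{j}$ bounded continuous with $h_{j} \equiv 0$ on $F$ and $h_{j} \to j\wedge (j\cdot \mathrm{dist}(\cdot,F))$ pattern so that $h_{j}$ vanishes on $F$ and grows to $j$ away from $F$, which, via the Laplace identity, produces the closed upper bound after taking $j \to \infty$. Compactness of level sets, inherited from the rate-function assumption, ensures that the infima appearing are attained and that the tightness implicit in the finiteness of $\inf_{\mathcal{X}} I$ holds.

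The main obstacle I expect is the closed-set upper bound in the Bryc direction, since bounded continuous functions do not directly test indicator functions of arbitrary closed sets; one must carefully construct a sequence of test functions that simultaneously (i) vanish on $F$, (ii) grow large off $F$, and (iii) interact well with the compact level sets of $I$ so that the infimum of $h_{j}+I$ converges to $\inf_{F} I$. This convergence is where the compactness of $\{I\le \alpha\}$ is indispensable: only on such compact sets can one pass a $j$-depending infimum through to the limit and identify $\lim_{j\to\infty}\inf_{x\in\mathcal{X}}\{h_{j}(x)+I(x)\}$ with $\inf_{x\in F}I(x)$. Once this identification is done, both bounds follow, and combined with the already-verified level-set compactness of $I$, the LDP is established, completing the equivalence.
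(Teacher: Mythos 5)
The paper does not prove this proposition at all; it is quoted as a known equivalence and cited to \cite[Theorems~1.2.1 and 1.2.3]{dupuis1997weak}, so there is no in-paper argument to compare against. Your sketch is a correct outline of the standard proof given in that reference: Varadhan-type estimates (partitioning the range of $h$ for the upper bound, localizing around a near-optimal point for the lower bound) for the LDP~$\Rightarrow$~Laplace direction, and Bryc-type test functions of the form $j(1\wedge d(\cdot,F))$, $j(1\wedge d(\cdot,x_0))$ together with lower semicontinuity and compactness of the level sets of $I$ for the converse. The only point to make explicit is that the passage $\lim_{j\to\infty}\inf_x\{h_j(x)+I(x)\}=\inf_F I$ uses, beyond compactness, the \emph{closedness} of the level sets (i.e., lower semicontinuity of $I$) to pass to a limit point in $F$.
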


In view of this equivalent result, we will focus on the Laplace principle.
To present a criteria for the Laplace principle, we set
$$S_{\alpha} := \Big\{\varphi \in L^{2}([0,T];\R^{m}) \Big|
\int_{0}^{T}|\varphi(s)|^{2}\diff{s} \leq \alpha\Big\}$$
and
$$\mathcal{A}_{\alpha} :=
\big\{u \colon \Omega \times [0,T] \to \R^{m}~|~ u\text{~is~} \{\F_{t}\}_{t \in [0,T]}\text{-predictable and~}
u \in S_{\alpha}, \P\text{-a.s.}\big\}$$
for each $\alpha \in (0,\infty)$.
Throughout this paper, the set $S_{\alpha}$ will be always endowed with the weak topology of $L^{2}([0,T];\R^{m})$. Thus $S_{\alpha}$ is a compact Polish space under this weak topology.
The following criteria for the Laplace principle is due to Budhiraja and Dupuis in \cite[Theorem 4.4]{budhiraja2000variational}.

\begin{lemma}\label{lem:sufficientlemma}
      For each $\varepsilon > 0$, let $F^{\varepsilon} \colon C([0,T];\R^{m}) \to C_{\phi}([-\tau,T];\R^{d})$ be a measurable map. If there exists a measurable map $F \colon C([0,T];\R^{m}) \to C_{\phi}([-\tau,T];\R^{d})$ such that the following two conditions hold.
      \begin{enumerate}
        \item [(a)] For each $\alpha \in (0,\infty)$, the set
                    $$\Big\{F\Big(\int_{0}^{\cdot} \varphi(s) \diff{s}\Big) \Big| \varphi \in S_{\alpha}\Big\}$$
                    is a compact subset of $C_{\phi}([-\tau,T];\R^{d})$.

        \item [(b)] If
                    $\{u^{\varepsilon}\}_{\varepsilon > 0} \subset \mathcal{A}_{\alpha}$ for some $\alpha \in (0,\infty)$ and $u^{\varepsilon} \xrightarrow[\varepsilon \to 0]{d} u$ as $S_{\alpha}$-valued random variables, then $$F^{\varepsilon}\Big(W + \frac{1}{\sqrt{\varepsilon}} \int_{0}^{\cdot} u^{\varepsilon}(s) \diff{s}\Big) \xrightarrow[\varepsilon \to 0]{d} F\Big(\int_{0}^{\cdot} u(s) \diff{s}\Big).$$
      \end{enumerate}
      Then $\{F^{\varepsilon}(W)\}_{\varepsilon > 0}$ satisfies the Laplace principle (hence LDP) on $C_{\phi}([-\tau,T];\R^{d})$ with rate function $I \colon C_{\phi}([-\tau,T];\R^{d}) \to [0,+\infty]$ defined by
      $$I(f)
      =
      \inf_{\{\varphi \in L^{2}([0,T];\R^{m}) |
      f = F(\int_{0}^{\cdot} \varphi(s) \diff{s})\}}
      \Big\{
      \tfrac{1}{2}\int_{0}^{T}|\varphi(s)|^{2}\diff{s}\Big\}.$$
\end{lemma}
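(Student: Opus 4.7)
The plan is to verify the Laplace principle on $C_{\phi}([-\tau,T];\R^{d})$ with rate function $I$, from which the LDP follows via Proposition \ref{pro:ldplp}. The cornerstone is the Bou\'{e}--Dupuis variational representation for exponential functionals of Brownian motion: applied to $g = h \circ F^{\varepsilon}/\varepsilon$ and reparameterized through $u = \sqrt{\varepsilon}\,v$, it yields, for each bounded continuous $h$,
\begin{equation*}
      -\varepsilon \log \E\Big[\exp\Big(-\tfrac{h(F^{\varepsilon}(W))}{\varepsilon}\Big)\Big] = \inf_{u \in \mathcal{A}} \E\Big[\tfrac{1}{2}\int_{0}^{T}|u(s)|^{2}\diff{s} + h\Big(F^{\varepsilon}\Big(W + \tfrac{1}{\sqrt{\varepsilon}}\int_{0}^{\cdot} u(s)\diff{s}\Big)\Big)\Big],
\end{equation*}
where $\mathcal{A} := \bigcup_{\alpha > 0} \mathcal{A}_{\alpha}$. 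The task then reduces to showing this right-hand side converges to $\inf_{x}\{h(x) + I(x)\}$ as $\varepsilon \to 0$, together with checking that $I$ is a rate function. The latter is direct: by construction $\{I \leq \alpha\}$ is contained in $\{F(\int_{0}^{\cdot}\varphi\diff{s}) : \varphi \in S_{2\alpha}\}$, which is compact by condition (a), and a routine lower semi-continuity argument using weak compactness of $S_{2\alpha}$ closes the level set.

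For the upper bound $\varlimsup_{\varepsilon \to 0}(\text{LHS}) \leq \inf_{x}\{h(x) + I(x)\}$, I would fix $\eta > 0$ and pick a deterministic $\varphi \in L^{2}([0,T];\R^{m})$ with $\tfrac{1}{2}\|\varphi\|_{L^{2}}^{2} + h(F(\int_{0}^{\cdot}\varphi\diff{s})) \leq \inf_{x}\{h(x) + I(x)\} + \eta$. Plugging the constant control $u^{\varepsilon} \equiv \varphi \in \mathcal{A}_{\|\varphi\|_{L^{2}}^{2}}$ into the variational identity bounds its right-hand side from above. Condition (b), invoked with limit $u = \varphi$, gives $F^{\varepsilon}(W + \varepsilon^{-1/2}\int_{0}^{\cdot}\varphi\diff{s}) \xrightarrow[\varepsilon \to 0]{d} F(\int_{0}^{\cdot}\varphi\diff{s})$, and boundedness and continuity of $h$ deliver the desired inequality up to $\eta$, which is sent to $0$.

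The lower bound $\varliminf_{\varepsilon \to 0}(\text{LHS}) \geq \inf_{x}\{h(x) + I(x)\}$ is the main obstacle. For each $\varepsilon$ I would pick an $\varepsilon$-near-optimal $u^{\varepsilon} \in \mathcal{A}$ in the variational identity; since $h$ is bounded, the cost $\E\int_{0}^{T}|u^{\varepsilon}|^{2}\diff{s}$ is uniformly bounded, and a stopping-time truncation confines $u^{\varepsilon}$ to some $\mathcal{A}_{\alpha_{0}}$ at negligible increase. The family $\{u^{\varepsilon}\}$ is then tight in the compact Polish space $S_{\alpha_{0}}$ (weak $L^{2}$-topology), so along a subsequence $u^{\varepsilon} \xrightarrow{d} u \in S_{\alpha_{0}}$; condition (b) yields $F^{\varepsilon}(W + \varepsilon^{-1/2}\int_{0}^{\cdot}u^{\varepsilon}\diff{s}) \xrightarrow{d} F(\int_{0}^{\cdot}u\diff{s})$. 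Passing to a Skorokhod realization on which both convergences hold almost surely, I would combine Fatou's lemma (using weak lower semi-continuity of $\|\cdot\|_{L^{2}}^{2}$) with bounded convergence for the $h$-term, and invoke the pointwise estimate $\tfrac{1}{2}\|u(\tilde{\omega})\|_{L^{2}}^{2} \geq I(F(\int_{0}^{\cdot} u(\tilde{\omega})\diff{s}))$---which is immediate from the definition of $I$---to obtain
\begin{equation*}
      \varliminf_{\varepsilon \to 0}(\text{LHS}) \geq \E\Big[I\Big(F\Big(\int_{0}^{\cdot} u\diff{s}\Big)\Big) + h\Big(F\Big(\int_{0}^{\cdot} u\diff{s}\Big)\Big)\Big] \geq \inf_{x}\{h(x) + I(x)\}.
\end{equation*}

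The principal technical subtlety lies in executing this transfer cleanly: the limit control $u$ generically lives on an enlarged probability space carrying a new Brownian motion $\tilde{W}$, so one must verify that the Skorokhod realization preserves the adaptedness structure under which condition (b) was formulated---i.e., that the limiting pair $(u,\tilde{W})$ still belongs to the class of admissible controlled inputs. This is routine but requires care; once it is in place, sending $\eta \to 0$ combines with the matching upper bound to give the Laplace principle, and hence the LDP.
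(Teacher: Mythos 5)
The paper does not prove this lemma at all: it is quoted directly from Budhiraja--Dupuis \cite[Theorem~4.4]{budhiraja2000variational}, so there is no in-paper proof to compare against. Your sketch is a faithful reconstruction of the argument in that reference --- the rescaled Bou\'e--Dupuis variational identity, the upper bound via a near-optimal deterministic control together with condition~(b), and the lower bound via near-optimal random controls truncated into a fixed $\mathcal{A}_{\alpha_0}$, tightness in the weak-$L^{2}$ ball, a Skorokhod realization, Fatou with weak lower semicontinuity of the $L^{2}$-cost, and bounded convergence for the $h$-term --- and you correctly isolate the preservation of adaptedness under the Skorokhod embedding as the one genuinely delicate step. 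No substantive gap.
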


\section{The LDP for SDDEs}\label{sec:result}
This section is devoted to formulating the LDP of \eqref{eq:SDDE} by using Lemma \ref{lem:sufficientlemma}.
To begin, let us make the following assumption concerning \eqref{eq:SDDE}.
\begin{assumption}\label{eq:mainass}
      Let $|b(t,0,0)| + |\sigma(t,0,0)| \leq K_{1}$ for all $t \in [0,T]$ and some constant $K_{1} > 0$.
      Also let $b$ and $\sigma$ satisfy the globally monotone condition, i.e., there exists constants $\eta \in (1,\infty)$ and $K_{2} > 0$ such that
      \begin{equation}\label{eq:couplemonocond}
      \begin{split}
            \langle x_{1}-x_{2},
            b(t,x_{1},y_{1})-&~b(t,x_{2},y_{2}) \rangle
            +
            \eta|\sigma(t,x_{1},y_{1})-\sigma(t,x_{2},y_{2})|^{2}
            \\\leq&~
            K_{2}(|x_{1}-x_{2}|^{2} + |y_{1}-y_{2}|^{2}),
            \quad t \in [0,T],x_{1},x_{2},y_{1},y_{2} \in \R^{d}.
      \end{split}
      \end{equation}
      Further, let $b$ satisfy the polynomial growth condition, i.e., there exist constants $q \geq 1$ and $K_{3} > 0$ such that
      \begin{equation}\label{eq:bpolynomialgrow}
      \begin{split}
            |b(t,x_{1},y_{1})-b(t,x_{2},y_{2})|
            \leq&~
            K_{3}(1+|x_{1}|^{q-1}+|x_{2}|^{q-1}+|y_{1}|^{q-1}+|y_{2}|^{q-1})
            \\&~\times (|x_{1}-x_{2}|+|y_{1}-y_{2}|),
            \quad t \in [0,T],x_{1},x_{2},y_{1},y_{2} \in \R^{d}.
      \end{split}
      \end{equation}
\end{assumption}

Since the case of \eqref{eq:bpolynomialgrow} with $q = 1$ coincides with the well-known globally Lipschitz case studied in \cite{mohammed2006large,chiarini2014large,mo2013large}, we will focus on the case of $q > 1$, i.e., the super-linear growing case.
For example, one can consider the scalar SDDE
\begin{equation*}
      \diff{X^{\varepsilon}(t)}
      =
      \big(X^{\varepsilon}(t)-X^{\varepsilon}(t)^{3}
      +X^{\varepsilon}(t-\tau)\big)\diff{t}
      +
      \tfrac{\sqrt{\varepsilon}}{2}
      X^{\varepsilon}(t)^{2}\diff{W(t)},
      \quad t \in [0,T],
\end{equation*}
where $\{W(t)\}_{t \in [0,T]}$ is a real valued standard Brownian motion. Moreover, we refer to, e.g., \cite{mao2005khasminskii} and references therein for more concrete SDDEs satisfying Assumption \ref{eq:mainass}.

For convenience, we note several consequences of Assumption \ref{eq:mainass}. It follows from $|b(t,0,0)| + |\sigma(t,0,0)| \leq K_{1}, t \in [0,T]$  and \eqref{eq:couplemonocond} that there exists $K_{4} > 0$ such that
\begin{equation}\label{eq:globalcoercivitycond}
      \langle x,b(t,x,y) \rangle
      +
      \tfrac{\eta}{2}|\sigma(t,x,y)|^{2}
      \leq
      K_{4}(1 + |x|^{2} + |y|^{2}),
      \quad t \in [0,T],x,y \in \R^{d}.
\end{equation}
Moreover, using \eqref{eq:couplemonocond} and \eqref{eq:bpolynomialgrow}, it is easy to show that there exists $K_{5} > 0$ such that
\begin{equation}\label{eq:sigmapolynomialgrow}
\begin{split}
      |\sigma(t,x_{1},y_{1})-\sigma(t,x_{2},y_{2})|
      \leq&~
      K_{5}(1+|x_{1}|^{q-1}+|x_{2}|^{q-1}+|y_{1}|^{q-1}+|y_{2}|^{q-1})
      \\&~\times (|x_{1}-x_{2}|+|y_{1}-y_{2}|),
      \quad t \in [0,T],x_{1},x_{2},y_{1},y_{2} \in \R^{d}.
\end{split}
\end{equation}
Finally, \eqref{eq:bpolynomialgrow} and \eqref{eq:sigmapolynomialgrow} give the super-linearly growing bound, i.e., there exists $K_{6} > 0$ such that
\begin{equation}\label{eq:bsigmasuperlinear}
\begin{split}
      |b(t,x,y)| + |\sigma(t,x,y)|
      \leq K_{6}(1+|x|^{q}+|y|^{q}),
      \quad t \in [0,T],x,y \in \R^{d}.
\end{split}
\end{equation}

According to \eqref{eq:bpolynomialgrow} and \eqref{eq:sigmapolynomialgrow}, we know that $b$ and $\sigma$ are locally Lipschitz continuous. This together with \eqref{eq:globalcoercivitycond} is sufficient
to ensure the existence of a unique strong solution to \eqref{eq:SDDE}; see, e.g., \cite[Theorem 1.2]{mao2005khasminskii}.
\begin{theorem}
      Suppose that Assumption \ref{eq:mainass} holds. Then for each $\varepsilon > 0$, \eqref{eq:SDDE} admits a unique strong solution $\{X^{\varepsilon}(t)\}_{t \in [-\tau,T]}$, described by $X^{\varepsilon}(t) = \phi(t)$ for all $t \in [-\tau,0]$ and
      \begin{equation*}
            X^{\varepsilon}(t)
            =
            \phi(0)
            +
            \int_{0}^{t}
                b(s,X^{\varepsilon}(s),X^{\varepsilon}(s-\tau))
            \diff{s}
            +
            \int_{0}^{t}
            \sqrt{\varepsilon}
            \sigma(s,X^{\varepsilon}(s),X^{\varepsilon}(s-\tau))
            \diff{W(s)},
            \quad t \in [0,T].
      \end{equation*}
\end{theorem}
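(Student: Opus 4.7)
The plan is to use the classical method of steps, reducing the SDDE on each interval of length $\tau$ to an ordinary (non-delay) SDE with locally Lipschitz and monotone coefficients, then applying a standard existence and uniqueness result for such SDEs and patching the pieces together.

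First I would partition $[-\tau,T]$ into $[-\tau,0]\cup[0,\tau]\cup[\tau,2\tau]\cup\cdots$. On $[-\tau,0]$ the solution is prescribed to be $\phi$. On the first interval $[0,\tau\wedge T]$ the delayed argument $X^{\varepsilon}(t-\tau)=\phi(t-\tau)$ is a fixed continuous deterministic function, so \eqref{eq:SDDE} becomes the standard (non-delay) SDE
\begin{equation*}
\diff Y(t)=\tilde b(t,Y(t))\diff t+\sqrt{\varepsilon}\,\tilde\sigma(t,Y(t))\diff W(t),\qquad Y(0)=\phi(0),
\end{equation*}
with $\tilde b(t,x):=b(t,x,\phi(t-\tau))$ and $\tilde\sigma(t,x):=\sigma(t,x,\phi(t-\tau))$. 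I would check, using \eqref{eq:bpolynomialgrow} and \eqref{eq:sigmapolynomialgrow}, that $\tilde b,\tilde\sigma$ are locally Lipschitz in $x$ uniformly in $t\in[0,\tau]$, and, using \eqref{eq:globalcoercivitycond} together with the boundedness of $\phi$ on $[-\tau,0]$, that $\langle x,\tilde b(t,x)\rangle+\tfrac{\eta}{2}|\tilde\sigma(t,x)|^{2}\le C(1+|x|^{2})$. These are exactly the hypotheses of the Khasminskii-type existence and uniqueness theorem for SDEs with monotone coefficients, e.g.\ \cite[Theorem 1.2]{mao2005khasminskii}, which yields a unique strong solution $Y$ on $[0,\tau\wedge T]$, adapted to $\{\F_{t}\}$, with $\E\sup_{t\le \tau\wedge T}|Y(t)|^{2}<\infty$.

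Next I would iterate this argument. Setting $X^{\varepsilon}(t):=\phi(t)$ for $t\in[-\tau,0]$ and $X^{\varepsilon}(t):=Y(t)$ for $t\in[0,\tau\wedge T]$ produces a continuous adapted process on $[-\tau,\tau\wedge T]$. Assuming inductively that $X^{\varepsilon}$ has been constructed as the unique strong solution on $[-\tau,k\tau\wedge T]$ for some $k\ge 1$, the shifted input $t\mapsto X^{\varepsilon}(t-\tau)$ is a known continuous adapted process on $[k\tau,(k+1)\tau\wedge T]$. Freezing it and repeating the previous step (with $\phi(t-\tau)$ replaced by $X^{\varepsilon}(t-\tau)$), the same local-Lipschitz-plus-monotone conditions hold pathwise, yielding a unique strong solution on $[k\tau,(k+1)\tau\wedge T]$ that glues continuously to the previous piece at the endpoint. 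After finitely many iterations (since $T<\infty$), we cover $[-\tau,T]$, and the integral representation in the theorem follows directly from the differential form.

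The only step that requires real care is verifying that the one-sided growth condition \eqref{eq:globalcoercivitycond} actually transfers to each frozen subproblem. Since the delayed input on $[k\tau,(k+1)\tau\wedge T]$ is the already-constructed segment on $[(k-1)\tau,k\tau\wedge T]$, which is a continuous process and hence almost surely bounded on each compact interval, the coercivity estimate gives $\langle x,\tilde b(t,x)\rangle+\tfrac{\eta}{2}|\tilde\sigma(t,x)|^{2}\le C_{\omega}(1+|x|^{2})$ with a random but almost-surely-finite constant, which is sufficient for the monotone existence theorem to apply pathwise. This pathwise application is the one subtle point; once it is justified, uniqueness of the strong solution on $[-\tau,T]$ follows from uniqueness on each subinterval together with the pathwise uniqueness of continuous extensions, completing the proof.
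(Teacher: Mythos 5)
Your method of steps is a legitimate alternative route. The paper itself disposes of the theorem in one line: it observes that \eqref{eq:bpolynomialgrow}--\eqref{eq:sigmapolynomialgrow} give local Lipschitz continuity of $b,\sigma$, combines this with the Khasminskii-type coercivity \eqref{eq:globalcoercivitycond}, and cites \cite[Theorem 1.2]{mao2005khasminskii}, which is already stated for SDDEs. Your proposal effectively reconstructs the proof of that cited result; the reduction to a frozen non-delay SDE on each $[k\tau,(k+1)\tau\wedge T]$ is exactly how such theorems are proved internally, so the strategy is sound.

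However, your resolution of what you correctly flag as the subtle point is wrong. One cannot apply a monotone existence theorem for SDEs ``pathwise'' with an $\omega$-dependent coercivity constant $C_\omega$: the stochastic integral is not defined $\omega$-by-$\omega$, and the non-explosion argument (It\^o on $|Y(t)|^2$, stop at $\tau_R:=\inf\{t:|Y(t)|\ge R\}$, take expectations, Gronwall, let $R\to\infty$) requires taking expectations, at which stage an a.s.\ finite but otherwise uncontrolled $C_\omega$ is not enough. Two correct fixes are available. First, propagate a second-moment bound through the induction: on $[k\tau,(k+1)\tau\wedge T]$ the frozen coercivity actually reads
\begin{equation*}
\langle x,\tilde b(t,x)\rangle+\tfrac{\eta}{2}|\tilde\sigma(t,x)|^{2}
\le K_{4}\big(1+|x|^{2}+|X^{\varepsilon}(t-\tau)|^{2}\big),
\end{equation*}
and the previous step supplies $\E\sup_{s\le k\tau\wedge T}|X^{\varepsilon}(s)|^{2}<\infty$, so the Gronwall/localization argument goes through and yields non-explosion together with the $L^{2}$ bound on the new interval. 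Second, use that $\sup_{t\in[k\tau,(k+1)\tau\wedge T]}|X^{\varepsilon}(t-\tau)|$ is $\F_{k\tau}$-measurable, i.e.\ measurable with respect to the initial $\sigma$-algebra of that sub-problem; truncating the frozen input at level $N$ gives a deterministic coercivity constant, and letting $N\to\infty$ along $\{\sup_{t}|X^{\varepsilon}(t-\tau)|\le N\}\uparrow\Omega$ proves a.s.\ non-explosion of the untruncated solution. Either way, the word ``pathwise'' should be replaced by one of these genuinely probabilistic localization arguments; as written, that step does not stand.
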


To apply Lemma \ref{lem:sufficientlemma}, we will specify the maps $F^{\varepsilon}$ and $F$ in the context of SDDE \eqref{eq:SDDE}. For each $\varepsilon > 0$, it follows from $\{X^{\varepsilon}(t)\}_{t \in [-\tau,T]}$ being a strong solution to \eqref{eq:SDDE} and
the Yamada--Watanabe theorem (see, e.g., \cite{klenke2008probability})
that there exists a Borel measurable map
\begin{equation}\label{eq:mapFvarepsilon}
      F^{\varepsilon}
      \colon C([0,T];\R^{m}) \to C_{\phi}([-\tau,T];\R^{d})
\end{equation}
such that $X^{\varepsilon} = F^{\varepsilon}(W),\P$-a.s.
Similar to \cite[Lemma 1]{chiarini2014large},
for any $u^{\varepsilon} \in \mathcal{A}_{\alpha}$ with $\varepsilon > 0$ and $\alpha \in (0,\infty)$, the Girsanov theorem ensures that the stochastic process
\begin{equation}\label{eq:Yvaruvartttt}
      Y^{\varepsilon,u^{\varepsilon}}(t)
      :=
      F^{\varepsilon}\Big(W + \frac{1}{\sqrt{\varepsilon}}
\int_{0}^{\cdot} u^{\varepsilon}(s) \diff{s}\Big)(t),
\quad t \in [-\tau,T]
\end{equation}
is the unique strong solution of the following stochastic controlled equation
      \begin{equation}\label{eq:Yvaruvart}
      \begin{split}
            \diff{Y^{\varepsilon,u^{\varepsilon}}}(t)
            =&~
            b(t,Y^{\varepsilon,u^{\varepsilon}}(t),
            Y^{\varepsilon,u^{\varepsilon}}(t-\tau))\diff{t}
            +
            \sigma(t,Y^{\varepsilon,u^{\varepsilon}}(t),
            Y^{\varepsilon,u^{\varepsilon}}(t-\tau))
            u^{\varepsilon}(t)\diff{t}
            \\&~+
            \sqrt{\varepsilon}
            \sigma(t,Y^{\varepsilon,u^{\varepsilon}}(t),
            Y^{\varepsilon,u^{\varepsilon}}(t-\tau))
            \diff{W(t)},
            \quad t \in (0,T],
            \\
            Y^{\varepsilon,u^{\varepsilon}}(t) =&~ \phi(t), \quad t \in [-\tau,0].
      \end{split}
      \end{equation}
To define the map $F$, we introduce the skeleton
equation associated to \eqref{eq:SDDE} as follows
\begin{equation}\label{eq:skeletoneq}
\begin{split}
      \frac{\diff{z^{\varphi}(t)}}{\diff{t}}
      =&~
      b(t,z^{\varphi}(t),z^{\varphi}(t-\tau))
      +
      \sigma(t,z^{\varphi}(t),z^{\varphi}(t-\tau))\varphi(t),
      \quad t \in (0,T],
      \\
      z^{\varphi}(t) =&~ \phi(t), \quad t \in [-\tau,0]
\end{split}
\end{equation}
with $\varphi \in L^{2}([0,T];\R^{m})$; see Lemma \ref{lem:skeletoneqexu} for its well-posedness. Then one can define
\begin{equation}\label{eq:mapF}
      F \colon C([0,T];\R^{m}) \to C_{\phi}([-\tau,T];\R^{d}),
      f \mapsto F(f) = g
\end{equation}
with $g = z^{\varphi}$ if there exists $\varphi \in L^{2}([0,T];\R^{m})$ such that $f(\cdot) = \int_{0}^{\cdot}\varphi(s)\diff{s}$, otherwise $g \equiv 0$. Obviously, $F\big(\int_{0}^{\cdot} \varphi(s) \diff{s}\big) = z^{\varphi}$ for any $\varphi \in L^{2}([0,T];\R^{m})$.
After these preparations, we now elaborate our main result on the LDP for SDDE \eqref{eq:SDDE}, whose proof is postponed in Section \ref{sec:proof}.

\begin{theorem}\label{thm:ldpresult}
      Suppose that Assumption \ref{eq:mainass} holds with $\eta > 2q-1$.
      Then the family $\{X^{\varepsilon}\}_{\varepsilon > 0}$ satisfies the LDP on $C_{\phi}([-\tau,T];\R^{d})$ with rate function $I \colon C_{\phi}([-\tau,T];\R^{d}) \to [0,+\infty]$ defined by
      \begin{align*}
            I(f)
            =
            \inf_{\{
            \varphi \in L^{2}([0,T];\R^{m}) |
            f(\cdot) = \phi(0)
            +
            \int_{0}^{\cdot} b(s,f(s),f(s-\tau)) \diff{s}
            +
            \int_{0}^{\cdot}\sigma(s,f(s),f(s-\tau))\varphi(s)\diff{s}\}}
            \Big\{\frac{1}{2}\int_{0}^{T}|\varphi(s)|^{2}\diff{s}\Big\}.
      \end{align*}
\end{theorem}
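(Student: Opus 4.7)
The plan is to invoke the Budhiraja--Dupuis criterion, Lemma \ref{lem:sufficientlemma}, with the maps $F^{\varepsilon}$ and $F$ defined via \eqref{eq:mapFvarepsilon} and \eqref{eq:mapF}. The work then splits cleanly into verifying the compactness condition (a) for the family of skeleton solutions and the distributional convergence condition (b) for the controlled equation.

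For condition (a), I would first establish well-posedness of \eqref{eq:skeletoneq} by solving it iteratively on the intervals $[0,\tau],[\tau,2\tau],\ldots$, using the monotonicity condition \eqref{eq:couplemonocond} to obtain uniqueness and the bound \eqref{eq:globalcoercivitycond} to obtain existence. Then I would show that the map $S_{\alpha} \ni \varphi \mapsto z^{\varphi} \in C_{\phi}([-\tau,T];\R^{d})$ is continuous when $S_{\alpha}$ carries the weak $L^{2}$-topology. Given such continuity, the compactness of $S_{\alpha}$ in the weak topology immediately yields (a). Continuity is derived from a deterministic a priori estimate $\sup_{\varphi \in S_{\alpha}}|z^{\varphi}|_{C([-\tau,T];\R^{d})} \leq C(\alpha)$ (using monotonicity, polynomial growth, and $\int_{0}^{T}|\varphi|^{2}\diff s \leq \alpha$), equicontinuity via Cauchy--Schwarz on the $\sigma\varphi$ term, and the Arzelà--Ascoli theorem.

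For condition (b), the central ingredient is the uniform moment estimate
\[
\sup_{\varepsilon \in (0,1)}\sup_{u^{\varepsilon} \in \mathcal{A}_{\alpha}}\E\Big[\sup_{t \in [-\tau,T]}|Y^{\varepsilon,u^{\varepsilon}}(t)|^{p}\Big] < \infty
\]
for suitably large $p$. With this in hand, I would argue tightness of the joint laws of $(Y^{\varepsilon,u^{\varepsilon}},u^{\varepsilon},\sqrt{\varepsilon}W)$ in $C_{\phi}([-\tau,T];\R^{d}) \times S_{\alpha} \times C([0,T];\R^{m})$, extract a convergent subsequence via Prokhorov (and move to a common probability space if needed), and pass to the limit in \eqref{eq:Yvaruvart} to identify the limit with the skeleton solution $z^{u}$. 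The pure-noise piece $\sqrt{\varepsilon}\sigma(\cdots)\diff W$ vanishes in probability thanks to the moment bound and Burkholder--Davis--Gundy, while the controlled drift $\sigma(\cdots)u^{\varepsilon}$ passes to $\sigma(\cdots)u$ by combining strong convergence of $Y^{\varepsilon,u^{\varepsilon}}$ with weak convergence of $u^{\varepsilon}$ and uniform integrability from the moment estimate. Finally, the rate function stated in Theorem \ref{thm:ldpresult} coincides with the one delivered by Lemma \ref{lem:sufficientlemma} because $F(\int_{0}^{\cdot}\varphi\diff s) = z^{\varphi}$ by construction.

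The main obstacle is the moment estimate itself. Because of the super-linear growth \eqref{eq:bsigmasuperlinear} with exponent $q > 1$ in both state and delay variables, and because $u^{\varepsilon}$ only lies in $L^{2}([0,T];\R^{m})$ a.s., the standard Itô--Gronwall route breaks down: applying Itô's formula to $|Y^{\varepsilon,u^{\varepsilon}}|^{2p}$ produces a term of size $|Y^{\varepsilon,u^{\varepsilon}}|^{2p-1}|\sigma(\cdots)||u^{\varepsilon}|$ which cannot be controlled pointwise by $1+|Y^{\varepsilon,u^{\varepsilon}}|^{2p}$. My plan is to handle this in two coordinated moves. First, the strengthened assumption $\eta > 2q-1$ allows selecting $p$ with $2p-1 \leq \eta$ so that the Itô correction $p(2p-1)|Y^{\varepsilon,u^{\varepsilon}}|^{2p-2}|\sigma|^{2}$ is absorbed by the monotonicity piece $\langle Y^{\varepsilon,u^{\varepsilon}}, b\rangle + \frac{\eta}{2}|\sigma|^{2}$ coming from \eqref{eq:globalcoercivitycond}, neutralising the super-linear diffusion contribution. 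Second, I would iterate through the delay-sized intervals $[k\tau,(k+1)\tau]$, $k=0,1,\ldots,\lceil T/\tau\rceil-1$: on each interval the delayed argument is a process whose $L^{p}$-norm has been controlled at the previous step, reducing the problem to a forward equation with monotone super-linearly growing coefficients and an $L^{2}$-control. The mixed term involving $u^{\varepsilon}$ is then tamed by Young's inequality so that the $\int_{0}^{T}|u^{\varepsilon}|^{2}\diff s \leq \alpha$ bound appears as a uniform constant on the right-hand side, while a stochastic Gronwall lemma (rather than BDG) handles the remaining martingale term without requiring any exponential moment. Propagating the bound through the finitely many delay intervals closes the estimate uniformly in $\varepsilon \in (0,1)$ and $u^{\varepsilon} \in \mathcal{A}_{\alpha}$.
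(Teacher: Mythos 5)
Your plan reproduces the paper's architecture essentially step by step: the Budhiraja--Dupuis criterion (Lemma \ref{lem:sufficientlemma}), well-posedness and weak-topology continuity of the skeleton map for condition (a), and tightness plus a Skorokhod-representation passage to the limit for condition (b), with a stochastic Gronwall lemma driving the uniform moment estimate and an iteration over delay intervals. Two implementation points, however, are imprecise enough to leave genuine gaps.

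First, the role of $\eta > 2q-1$. You present it as the condition allowing a choice of $p$ with $2p-1\le\eta$ so that the It\^o correction is absorbed via \eqref{eq:globalcoercivitycond}. Absorption is indeed a constraint, but it by itself only caps the available moment order at (just below) $\eta+1$ and does not explain the specific threshold $2q-1$. The hypothesis is actually consumed in the Kolmogorov tightness step: one needs $\E\big[|Y^{\varepsilon,u^\varepsilon}(t)-Y^{\varepsilon,u^\varepsilon}(s)|^\beta\big]\le C|t-s|^{1+\gamma}$ for some $\beta>2$, and since $b$ and $\sigma$ grow like the $q$-th power, the integrands there require $q\beta$-th moments of $Y^{\varepsilon,u^\varepsilon}$. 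These are available from Lemma \ref{lem:momentbound} only when $q\beta<\eta+1$, and an admissible $\beta>2$ exists exactly when $\eta>2q-1$. Without this observation your sketch does not justify the hypothesis.

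Second, and more seriously, you gloss over the moment-order degradation that the stochastic Gronwall lemma forces. Lemma \ref{lem:stochasticgronwall} bounds $\E[\sup_{s\le t}Z(s)^{\tilde p}]$ only for $\tilde p\in(0,1)$; applying it with $Z=|Y^{\varepsilon,u^\varepsilon}|^{p'}$ returns a moment of order $p'\tilde p<p'$, while the resulting right-hand side still contains $\int_0^t|Y^{\varepsilon,u^\varepsilon}(r-\tau)|^{p'}\diff r$, i.e.\ the delayed argument at the full order $p'$, not the degraded order. A naive iteration over $[k\tau,(k+1)\tau]$ therefore does not close: to get order-$p'\tilde p$ control of the current segment you need order-$p'$ control of the previous one, and the orders never match up. The paper resolves this by inflating the starting exponent: it fixes a small $\bar\delta$ (chosen as $\delta/([T/\tau]+1)$), applies It\^o to $|Y^{\varepsilon,u^\varepsilon}|^{\bar p_i+\bar\delta}$ with $\bar p_i=p+([T/\tau]+1-i)\bar\delta$, takes $\tilde p=\bar p_i/(\bar p_i+\bar\delta)$, and uses the recursion $\bar p_{i+1}+\bar\delta=\bar p_i$ so the order drops by exactly $\bar\delta$ per delay interval and lands at $p$ after $[T/\tau]+1$ steps. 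The constraint $\bar p_1+\bar\delta\le\eta+1$ on the largest exponent is then what ties the admissible $p$ to $\eta+1$. Your ``propagating the bound through the finitely many delay intervals'' hides precisely this bookkeeping, which is the crux of the argument.

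Subject to these two clarifications, the remainder of the proposal (the Arzel\`a--Ascoli reduction for condition (a), the identification of the limit through the bounded continuous functional, and the vanishing of $\sqrt{\varepsilon}\int\sigma\diff W$) is in line with the paper.
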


\section{Proof of Theorem \ref{thm:ldpresult}}\label{sec:proof}

In this section, we intend to finish the proof of Theorem \ref{thm:ldpresult}. We first present several auxiliary results to characterize some qualitative properties of \eqref{eq:Yvaruvart} and \eqref{eq:skeletoneq}. The following lemma provides a priori estimate
for the solution of \eqref{eq:skeletoneq}.

\begin{lemma}\label{lem:squarezvarphit}
      Suppose that Assumption \ref{eq:mainass} holds and let $\{z^{\varphi}(t)\}_{t \in [-\tau,T]}$ be given by \eqref{eq:skeletoneq} with $\varphi \in L^{2}([0,T];\R^{m})$.
      Then we have
      \begin{equation}\label{eq:zvarphit2}
            |z^{\varphi}|_{C_{\phi}([-\tau,T];\R^{d})}^{2}
            \leq
            \big(|\phi(0)|^{2} + 2K_{4}T\big)
            \exp\big(4K_{4}T +
            \eta^{-1}|\varphi|_{L^{2}([0,T];\R^{m})}^{2}\big).
      \end{equation}
\end{lemma}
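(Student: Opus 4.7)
The plan is to derive the bound by applying the chain rule to $|z^{\varphi}(t)|^2$, combining the dissipativity estimate \eqref{eq:globalcoercivitycond} with Young's inequality to absorb the extra drift term $\sigma(\cdot)\varphi$ introduced by the control, and then invoking a (deterministic) Gronwall argument.

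First, since $z^{\varphi}$ is absolutely continuous on $[0,T]$ by \eqref{eq:skeletoneq}, the chain rule yields, for every $t \in [0,T]$,
\begin{align*}
|z^{\varphi}(t)|^2 = |\phi(0)|^2 &+ 2\int_0^t \langle z^{\varphi}(s),b(s,z^{\varphi}(s),z^{\varphi}(s-\tau))\rangle \diff{s} \\
&+ 2\int_0^t \langle z^{\varphi}(s),\sigma(s,z^{\varphi}(s),z^{\varphi}(s-\tau))\varphi(s)\rangle \diff{s}.
\end{align*}

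Next, I would estimate each integrand separately. By \eqref{eq:globalcoercivitycond}, the first is bounded above by $2K_{4}(1+|z^{\varphi}(s)|^2+|z^{\varphi}(s-\tau)|^2) - \eta|\sigma(s,z^{\varphi}(s),z^{\varphi}(s-\tau))|^2$. For the second, the Cauchy--Schwarz inequality followed by Young's inequality $2ab \leq \eta a^2 + \eta^{-1}b^2$ applied with $a = |\sigma(s,\cdot)|$ and $b = |z^{\varphi}(s)||\varphi(s)|$ gives the upper bound $\eta|\sigma(s,\cdot)|^2 + \eta^{-1}|z^{\varphi}(s)|^2|\varphi(s)|^2$. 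The crucial observation is that the two $\eta|\sigma|^2$ contributions cancel \emph{exactly}; using $|z^{\varphi}(s)|^2 + |z^{\varphi}(s-\tau)|^2 \leq 2\sup_{r\in[-\tau,s]}|z^{\varphi}(r)|^2$, one arrives at
\[
\sup_{r\in[-\tau,t]}|z^{\varphi}(r)|^2 \leq |\phi(0)|^2 + 2K_{4}T + \int_0^t \bigl(4K_{4} + \eta^{-1}|\varphi(s)|^2\bigr)\sup_{r\in[-\tau,s]}|z^{\varphi}(r)|^2 \diff{s}
\]
(together with the trivial control $\sup_{r\in[-\tau,0]}|z^{\varphi}(r)|^2 \leq |\phi(0)|^2$ on the initial interval).

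Finally, the classical Gronwall inequality applied to $t \mapsto \sup_{r\in[-\tau,t]}|z^{\varphi}(r)|^2$, combined with $\int_0^T\bigl(4K_{4}+\eta^{-1}|\varphi(s)|^2\bigr)\diff{s} = 4K_{4}T + \eta^{-1}|\varphi|_{L^{2}([0,T];\R^{m})}^{2}$, delivers \eqref{eq:zvarphit2}. The only delicate point is the choice of coupling constant in Young's inequality: it must be matched precisely to the constant $\eta$ appearing in \eqref{eq:globalcoercivitycond} so that the two $|\sigma|^{2}$ terms annihilate. This is essentially the only place the hypothesis $\eta > 1$ is exploited here, and everything else is routine bookkeeping; it is worth noting that the more stringent assumption $\eta > 2q-1$ from Theorem \ref{thm:ldpresult} is not needed for this preliminary estimate.
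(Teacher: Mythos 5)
Your proof is correct and follows essentially the same route as the paper: the chain rule applied to $|z^{\varphi}(t)|^2$, the coercivity bound \eqref{eq:globalcoercivitycond} paired with the Young inequality calibrated to $\eta$ so that the $\eta|\sigma|^2$ terms cancel, and then Gronwall applied to $t\mapsto\sup_{r\in[-\tau,t]}|z^{\varphi}(r)|^2$. The only cosmetic difference is the bookkeeping of the initial segment (your $\sup_{r\in[-\tau,0]}|z^{\varphi}(r)|^2\leq|\phi(0)|^2$ step shares the same implicit treatment of $\phi$ on $[-\tau,0]$ as the paper's own argument), and your remark that $\eta>2q-1$ is not needed here is consistent with the paper, which only invokes that condition later.
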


\begin{proof}
      Applying the Schwarz inequality, the weighted Young inequality $ab \leq \frac{\eta}{2} a^{2} + \frac{b^{2}}{2\eta}$ for any $a,b \in \R$ and \eqref{eq:globalcoercivitycond} yields
      \begin{equation*}
      \begin{split}
            \frac{\diff{|z^{\varphi}(t)|^{2}}}{\diff{t}}
            =&~
            2\langle z^{\varphi}(t),
            b(t,z^{\varphi}(t),z^{\varphi}(t-\tau))\rangle
            +
            2\langle z^{\varphi}(t),
            \sigma(t,z^{\varphi}(t),
            z^{\varphi}(t-\tau))\varphi(t)\rangle
            \\\leq&~
            2\big(\langle z^{\varphi}(t),
            b(t,z^{\varphi}(t),z^{\varphi}(t-\tau))\rangle
            +
            \tfrac{\eta}{2}|\sigma(t,z^{\varphi}(t),
            z^{\varphi}(t-\tau))|^{2}\big)
            +
            \tfrac{1}{\eta}
            |z^{\varphi}(t)|^{2}|\varphi(t)|^{2}
            \\\leq&~
            2K_{4}\big( 1+ |z^{\varphi}(t)|^{2} +
            |z^{\varphi}(t-\tau)|^{2}\big)
            +
            \tfrac{1}{\eta}
            |z^{\varphi}(t)|^{2}|\varphi(t)|^{2}.
      \end{split}
      \end{equation*}
      It follows that
      \begin{align*}
            |z^{\varphi}(t)|^{2}
            \leq&~
            |\phi(0)|^{2} + 2K_{4}T
            +
            \int_{0}^{t}
            \sup_{r \in [-\tau,s]}|z^{\varphi}(r)|^{2}
            \big(4K_{4}+\tfrac{1}{\eta}|\varphi(s)|^{2}\big)\diff{s}
      \end{align*}
      and thus
      \begin{equation*}
            \sup_{r \in [-\tau,t]}|z^{\varphi}(r)|^{2}
            \leq
            |\phi(0)|^{2} + 2K_{4}T
            +
            \int_{0}^{t}
            \sup_{r \in [-\tau,s]}|z^{\varphi}(r)|^{2}
            \big(4K_{4} +\tfrac{1}{\eta}|\varphi(s)|^{2}\big)\diff{s}.
      \end{equation*}
      Utilizing the Gronwall inequality yields that for all $t \in [0,T]$, one has
      \begin{equation*}\label{eq:zvarphirpgeq2}
      \begin{split}
            \sup_{r \in [-\tau,t]}|z^{\varphi}(r)|^{2}
            \leq&~
            \big(|\phi(0)|^{2} + 2K_{4}T\big)
            \exp\Big(\int_{0}^{t}4K_{4} +
            \tfrac{1}{\eta}|\varphi(s)|^{2}\diff{s}\Big)
            \\\leq&~
            \big(|\phi(0)|^{2} + 2K_{4}T\big)
            \exp\Big(4K_{4}T + \tfrac{|\varphi|_{L^{2}([0,T];\R^{m})}^{2}}{\eta}\Big).
      \end{split}
      \end{equation*}
      Thus we complete the proof.
\end{proof}

Based on the above a priori estimate, we can establish the well-posedness of the skeleton equation \eqref{eq:skeletoneq}, as stated by the following lemma.

\begin{lemma}\label{lem:skeletoneqexu}
      Suppose that Assumption \ref{eq:mainass} holds. Then \eqref{eq:skeletoneq} admits a unique solution $\{z^{\varphi}(t)\}_{t \in [-\tau,T]}$.
\end{lemma}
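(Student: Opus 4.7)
The plan is to construct the solution by the classical method of steps on consecutive intervals of length $\tau$, using the Gronwall argument behind Lemma \ref{lem:squarezvarphit} to preclude blow-up on each step, and using the monotonicity condition \eqref{eq:couplemonocond} to obtain uniqueness. Partition $[0,T]$ into subintervals $I_{k}:=[(k-1)\tau,k\tau]\cap[0,T]$ for $k=1,\ldots,N$ with $N=\lceil T/\tau\rceil$. On $I_{1}$, the delayed term $z^{\varphi}(t-\tau)=\phi(t-\tau)$ is known and continuous, so \eqref{eq:skeletoneq} collapses to the Carath\'eodory-type initial-value problem
\begin{equation*}
\dot{z}(t)=b(t,z(t),\phi(t-\tau))+\sigma(t,z(t),\phi(t-\tau))\varphi(t),\qquad z(0)=\phi(0).
\end{equation*}

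First, I would invoke the Carath\'eodory existence theorem on $I_{1}$. The right-hand side is measurable in $t$, locally Lipschitz (and hence continuous) in $x$ by \eqref{eq:bpolynomialgrow} and \eqref{eq:sigmapolynomialgrow}, and for $|z|\le R$ bounded by $C_{R}(1+|\varphi(t)|)\in L^{1}(I_{1})$ via \eqref{eq:bsigmasuperlinear}; this produces a local absolutely continuous solution. To extend it to all of $I_{1}$, I would rerun the Gronwall computation of Lemma \ref{lem:squarezvarphit} on the maximal existence interval: the coercivity \eqref{eq:globalcoercivitycond} together with the weighted Young inequality $2ab\le\eta a^{2}+b^{2}/\eta$ absorbs the $\sigma\varphi$ term into the $\tfrac{\eta}{2}|\sigma|^{2}$ part and yields exactly the a priori bound \eqref{eq:zvarphit2}, ruling out explosion.

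For uniqueness on $I_{1}$, let $z_{1},z_{2}$ be two solutions. Since they share the common delayed values $\phi(\cdot-\tau)$, differentiating $|z_{1}-z_{2}|^{2}$ and applying \eqref{eq:couplemonocond} with $y_{1}=y_{2}$, together with the weighted Young inequality on the cross-term $\langle z_{1}-z_{2},(\sigma(\cdot,z_{1},\cdot)-\sigma(\cdot,z_{2},\cdot))\varphi\rangle$, yields
\begin{equation*}
\frac{d}{dt}|z_{1}(t)-z_{2}(t)|^{2}\le\Bigl(2K_{2}+\tfrac{1}{2\eta}|\varphi(t)|^{2}\Bigr)|z_{1}(t)-z_{2}(t)|^{2}.
\end{equation*}
Since $\varphi\in L^{2}([0,T];\R^{m})$, the integrating factor $\int_{0}^{T}(2K_{2}+|\varphi(s)|^{2}/(2\eta))\diff s$ is finite, and Gronwall's inequality forces $z_{1}\equiv z_{2}$ on $I_{1}$.

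Finally, iterating the existence-uniqueness argument on $I_{2},I_{3},\ldots,I_{N}$, where on each step the delayed values are supplied by the unique solution already constructed on the preceding subinterval, produces the required unique solution on $[-\tau,T]$. The main technical obstacle I anticipate is precisely the non-explosion on each step in the presence of the super-linear growth \eqref{eq:bsigmasuperlinear} of $b$ and $\sigma$; this is resolved by the monotonicity constant $\eta$ in Assumption \ref{eq:mainass}, whose role in absorbing the $\sigma\varphi$ cross-term through Young's inequality (as in Lemma \ref{lem:squarezvarphit}) is what keeps the argument from breaking down.
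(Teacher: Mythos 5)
Your argument is correct, but it follows a genuinely different route from the paper. You exploit the constant-delay structure via the method of steps: on each interval $[(k-1)\tau,k\tau]$ the delayed argument is a known continuous function, so \eqref{eq:skeletoneq} reduces to a non-delayed Carath\'eodory ODE whose right-hand side is only $L^{1}$ in $t$ (because $\varphi\in L^{2}$) but locally Lipschitz in the state; local existence comes from Carath\'eodory's theorem, non-explosion from re-running the coercivity-plus-Young estimate of Lemma \ref{lem:squarezvarphit} on the maximal interval, and uniqueness from \eqref{eq:couplemonocond} with $y_{1}=y_{2}$ together with the integrability of $|\varphi|^{2}$. The paper instead truncates $\sigma$ to a bounded, globally Lipschitz $\sigma_{n}$, freezes the $\sigma_{n}$-forcing along a candidate path $\zeta$, solves the resulting delay ODE (via the monotone drift $B$), shows the induced map $\Gamma_{n}$ is a contraction on $C_{\phi}([-\tau,T_{0}];\R^{d})$ for small $T_{0}$, patches the fixed points over $[0,T]$, and finally removes the truncation using the same a priori bound of Lemma \ref{lem:squarezvarphit}. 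Your approach is more elementary: it avoids both the truncation and the Banach fixed-point step, at the cost of leaning on Carath\'eodory existence theory and on the specific point-delay structure (it would not carry over directly to distributed or state-dependent delays, where the paper's freeze-and-contract scheme is closer to what is needed). Both proofs hinge on the same two ingredients — the monotonicity constant $\eta$ absorbing the $\sigma\varphi$ term through the weighted Young inequality, and the resulting a priori bound \eqref{eq:zvarphit2} precluding blow-up — so the essential analytic content is shared; only the packaging of the existence step differs. Minor points you leave implicit but which are standard: measurability in $t$ of the frozen right-hand side, and the continuation argument at the endpoint of the maximal existence interval using the $L^{1}$ majorant together with the uniform bound.
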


\begin{proof}
      For any $n \in \N$, denote $\sigma_{n} \colon [0,T] \times \R^{d} \times \R^{d} \to \R^{d \times m}$ by
      \begin{equation*}
            \sigma_{n}(t,x,y)
            =
            \begin{cases}
            \sigma(t,x,y),
            & |x|\vee|y| \leq n,
            \\
            \sigma(t,x,y)(2-|x|/n),
            & |y| \leq n < |x| \leq 2n,
            \\
            \sigma(t,x,y)(2-|y|/n),
            & |x| \leq n < |y| \leq 2n,
            \\
            \sigma(t,x,y)(2-|x|/n)(2-|y|/n),
            & n < |x|\wedge|y| \leq |x|\vee |y| \leq 2n,
            \\
            0,
            &\text {otherwise},
            \end{cases}
      \end{equation*}
      where we have used the notations $a \vee b := \max\{a,b\}$ and $a \wedge b := \min\{a,b\}$ for any $a,b \in \R$. It is easy to check that $\sigma_{n}$ is globally Lipschitz continuous with the Lipschitz constant depending on $n$.
      For any fixed $\zeta\in  C_{\phi}([-\tau,T];\R^{d})$, we claim that the following equation
      \begin{equation}
      \begin{split}\label{Gamma_n}
            \frac{\diff{u(t)}}{\diff{t}}
            =&~
            b(t,u(t),u(t-\tau))
            +
            \sigma_n(t,\zeta(t),\zeta(t-\tau))\varphi(t),
            \quad t \in (0,T],
            \\
            u(t)
            =&~
            \phi(t), \quad t \in [-\tau,0]
      \end{split}
      \end{equation}
      admits a unique solution $\Gamma_n(\zeta)\in C_{\phi}([-\tau,T];\R^{d})$. Actually, since $\sigma_{n}$ is bounded and $\varphi \in L^{2}([0,T];\R^{m})$, the integral
      $$
            U_{n}^{\zeta}(t)
            :=
            \int_{0}^{t} \sigma_n(s,\zeta(s),\zeta(s-\tau))\varphi(s)\diff{s},
            \quad t \in [0,T]
      $$
      is well defined. By setting $U_{n}^{\zeta}(t):=0, t\in [-\tau,0]$ and $M_{n}^{\zeta}(t) := u(t)-U_{n}^{\zeta}(t)$, we have
      \begin{equation}\label{Meqqq}
      \begin{split}
            \frac{\diff{M_{n}^{\zeta}(t)}}{\diff{ t}}
            &=
            b(t,M_{n}^{\zeta}(t)
            + U_{n}^{\zeta}(t),M_{n}^{\zeta}(t-\tau)
            + U_{n}^{\zeta}(t-\tau)),
            \quad t\in [0,T],
            \\
            M_{n}^{\zeta}(t)
            &=
            \phi(t), \quad t\in[-\tau,0].
      \end{split}
      \end{equation}
      Denote $B(t,x,y):=b(t,x + U_{n}^{\zeta}(t),y+ U_{n}^{\zeta}(t-\tau)), t \in [0,T],x,y \in \R^{d}$. Then \eqref{Meqqq} can be rewritten as
      \begin{equation}
      \begin{split}\label{Meq}
            \frac{\diff{M_{n}^{\zeta}(t)}}{\diff{ t}}
            &=
            B(t,M_{n}^{\zeta}(t),M_{n}^{\zeta}(t-\tau)),
            \quad t \in [0,T],
            \\
            M_{n}^{\zeta}(t)
            &=
            \phi(t), \quad t\in[-\tau,0].
      \end{split}
      \end{equation}
      Since $B$ is locally Lipschitz continuous, \eqref{Meq} admits a unique local solution. It is verified that
      \begin{equation*}
            \langle x_{1}-x_{2},
            B(t,x_{1},y_{1})-B(t,x_{2},y_{2}) \rangle
            \leq
            K_{2}(|x_{1}-x_{2}|^{2} + |y_{1}-y_{2}|^{2}),
            \quad t \in [0,T],x_{1},x_{2},y_{1},y_{2} \in \R^{d}.
      \end{equation*}
      By the above globally monotone condition and the standard extension arguments, such a local solution of \eqref{Meq} can be extended to the whole interval $[0, T]$.

      For any $\zeta_{1}, \zeta_{2} \in C_{\phi}([-\tau,T];\R^{d})$,
      \begin{align*}
            &\frac{\diff{|\Gamma_n(\zeta_1)(t)-\Gamma_n(\zeta_2)(t)|^2}}{\diff{t}}
            \\=&~
            2\langle \Gamma_n(\zeta_1)(t)-\Gamma_n(\zeta_2)(t),
            b(t,\Gamma_n(\zeta_1)(t),\Gamma_n(\zeta_1)(t-\tau))
            -b(t,\Gamma_n(\zeta_2)(t),\Gamma_n(\zeta_2)(t-\tau))\rangle
            \\&~
            +2\langle \Gamma_n(\zeta_1)(t)-\Gamma_n(\zeta_2)(t),
            (\sigma_n(t,\zeta_1(t),\zeta_1(t-\tau))
            -\sigma_n(t,\zeta_2(t),\zeta_2(t-\tau)))\varphi(t)\rangle .
      \end{align*}
      In view of \eqref{eq:couplemonocond}, we obtain
      \begin{align*}
            &|\Gamma_n(\zeta_1)(t)-\Gamma_n(\zeta_2)(t)|^{2}
            \\=&~
            2\int_0^t\langle
            \Gamma_n(\zeta_1)(s)-\Gamma_n(\zeta_2)(s),
            b(s,\Gamma_n(\zeta_1)(s),\Gamma_n(\zeta_1)(s-\tau))
            -b(s,\Gamma_n(\zeta_2)(s),\Gamma_n(\zeta_2)(s-\tau))
            \rangle \diff{s}
            \\&~+
            2\int_0^t\langle \Gamma_n(\zeta_1)(s)-\Gamma_n(\zeta_2)(s),
            (\sigma_n(s,\zeta_1(s),\zeta_1(s-\tau))
            -\sigma_n(s,\zeta_2(s),\zeta_2(s-\tau)))\varphi(s)
            \rangle \diff{s}
            \\\le&~
            2K_2\int_0^t
            |\Gamma_n(\zeta_1)(s)-\Gamma_n(\zeta_2)(s)|^{2}
            +
            |\Gamma_n(\zeta_1)(s-\tau)-\Gamma_n(\zeta_2)(s-\tau)|^2\diff{s}
            \\&~+
            \int_0^t|\Gamma_n(\zeta_1)(s)-\Gamma_n(\zeta_2)(s)|^2
            +|\sigma_n(s,\zeta_1(s),\zeta_1(s-\tau))
            -\sigma_n(s,\zeta_2(s),\zeta_2(s-\tau))|^2|\varphi(s)|^2\diff{s}
            \\\le&~
            (4K_2+1)\int_0^t
            |\Gamma_n(\zeta_1)(s)-\Gamma_n(\zeta_2)(s)|^2\diff{s}
            +
            C(n)|\varphi|_{L^{2}([0,T];\R^{m})}^{2}
            |\zeta_{1}-\zeta_{2}|_{C_{\phi}([-\tau,T];\R^{d})}^{2}.
      \end{align*}
      The Gronwall inequality yields
      \begin{align*}
            |\Gamma_n(\zeta_1)(t)-\Gamma_n(\zeta_2)(t)|^2
            \le&~
            e^{(4K_2+1)T_0}C(n)
            |\varphi|_{L^{2}([0,T];\R^{m})}^{2}
            |\zeta_{1}-\zeta_{2}|_{C_{\phi}([-\tau,T];\R^{d})}^{2}.
      \end{align*}
      Choosing $T_0 > 0$ such that
      \begin{equation*}
            e^{(4K_2+1)T_0}C(n)
            |\varphi|_{L^{2}([0,T];\R^{m})}^{2}
            <
            1,
      \end{equation*}
      we obtain that $\Gamma_{n}$ is a contraction in $C_{\phi}([-\tau,T_0];\R^{d})$. Hence $\Gamma^{n}$ has a unique fixed point, which is the unique solution of
      \begin{equation}\label{eq:skeletoneq_n}
      \begin{split}
            \frac{\diff{z_n^{\varphi}(t)}}{\diff{t}}
            =&~
            b(t,z_n^{\varphi}(t),z_n^{\varphi}(t-\tau))
            +
            \sigma_n(t,z_n^{\varphi}(t),z_n^{\varphi}(t-\tau))\varphi(t),
            \quad t \in (0,T_0],
            \\
            z_n^{\varphi}(t) =&~ \phi(t), \quad t \in [-\tau,0].
      \end{split}
      \end{equation}
      As $T_{0}$ depends only on $\varphi$, $\phi$, $n$ and $K_2$, we can proceed in the same way in $\left[T_{0}, 2 T_{0}\right]$ and so on. In this way, we get the existence of a unique solution for \eqref{eq:skeletoneq_n} which is defined in the whole interval $[-\tau, T]$. Further, it is verified that
      \begin{equation*}
      \langle x,b(t,x,y) \rangle
      +
      \tfrac{\eta}{2}|\sigma_{n}(t,x,y)|^{2}
      \leq
      K_{4}(1 + |x|^{2} + |y|^{2}),
      \quad t \in [0,T],x,y \in \R^{d}.
      \end{equation*}
      Similar to the proof of Lemma \ref{lem:squarezvarphit}, one has  $|z_{n}^{\varphi}|_{C_{\phi}([-\tau,T];\R^{d})}\le C(K_4,T,\eta,\phi,\varphi)$. This implies $ \sigma_n(t,z_n^{\varphi}(t),z_n^{\varphi}(t-\tau))= \sigma(t,z_n^{\varphi}(t),z_n^{\varphi}(t-\tau))$ for any $t \in [0,T]$ provided that $n\ge C(K_4,T,\eta,\phi,\varphi)$, which together with Lemma \ref{lem:squarezvarphit} ensures the existence of a unique solution for \eqref{eq:skeletoneq}. Thus the proof is complete.
\end{proof}

The following result shows the compactness of an integral operator and will be used to verify the conditions in Lemma \ref{lem:sufficientlemma}.
\begin{lemma}\label{lem:compactG}
      Define the operator $G \colon L^{2}([0,T];\R^{m}) \to C([0,T];\R^{m})$ by
      \begin{equation}\label{eq:mapG}
            G(\varphi)(\cdot)
            =
            \int_{0}^{\cdot} \varphi(s) \diff{s},
            \quad \varphi \in L^{2}([0,T];\R^{m}).
      \end{equation}
      Then $G$ is compact, i.e., for any $\alpha \in (0,\infty)$, $G(S_{\alpha}) := \{G(\varphi) | \varphi \in S_{\alpha}\}$ is relatively compact in $C([0,T];\R^{m})$.
\end{lemma}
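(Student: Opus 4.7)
The plan is to invoke the Arzelà–Ascoli theorem: it suffices to show that $G(S_\alpha)$ is uniformly bounded and equicontinuous in $C([0,T];\R^{m})$.

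First, I would establish uniform boundedness. For any $\varphi \in S_\alpha$ and $t \in [0,T]$, the Cauchy–Schwarz inequality gives
\begin{equation*}
      |G(\varphi)(t)|
      =
      \Big|\int_{0}^{t} \varphi(s)\diff{s}\Big|
      \leq
      \sqrt{t}\,|\varphi|_{L^{2}([0,T];\R^{m})}
      \leq
      \sqrt{T\alpha},
\end{equation*}
so $G(S_\alpha)$ is uniformly bounded in $C([0,T];\R^{m})$ by $\sqrt{T\alpha}$.

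Next, I would establish uniform equicontinuity. For any $\varphi \in S_\alpha$ and any $0 \leq s \leq t \leq T$, Cauchy–Schwarz again yields
\begin{equation*}
      |G(\varphi)(t)-G(\varphi)(s)|
      =
      \Big|\int_{s}^{t} \varphi(r)\diff{r}\Big|
      \leq
      \sqrt{t-s}\,|\varphi|_{L^{2}([0,T];\R^{m})}
      \leq
      \sqrt{\alpha(t-s)}.
\end{equation*}
Thus all elements of $G(S_\alpha)$ are uniformly $\tfrac{1}{2}$-Hölder continuous with constant $\sqrt{\alpha}$, which is strictly stronger than equicontinuity.

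With both conditions verified, the Arzelà–Ascoli theorem gives that $G(S_\alpha)$ is relatively compact in $C([0,T];\R^{m})$, completing the proof. There is no real obstacle here: everything follows from two applications of Cauchy–Schwarz to the $L^2$ ball $S_\alpha$, and the argument is independent of the SDDE framework—it is a standalone, purely functional-analytic statement about the primitive map.
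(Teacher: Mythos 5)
Your proof is correct and follows essentially the same route as the paper: both verify uniform boundedness and equicontinuity of $G(S_{\alpha})$ via the Cauchy--Schwarz (H\"{o}lder) inequality and then invoke the Arzel\`{a}--Ascoli theorem. The only cosmetic difference is that you phrase equicontinuity as a uniform $\tfrac{1}{2}$-H\"{o}lder bound, while the paper spells out the $\kappa$--$\delta$ formulation; these are equivalent.
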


\begin{proof}
      Obviously, $G$ is a bounded linear operator. For any given $\alpha \in (0,\infty)$, it suffices to show that $G(S_{\alpha})$ is bounded and equicontinuous in view of the Arz\`{e}la--Ascolil theorem. In fact, for any $\varphi \in S_{\alpha}$, we use the H\"{o}lder inequality to get
      \begin{equation*}
            |G(\varphi)(t)|
            =
            \Big|\int_{0}^{t} \varphi(s) \diff{s}\Big|
            \leq
            \sqrt{T}\Big(\int_{0}^{t} |\varphi(s)|^{2} \diff{s}\Big)^{\frac{1}{2}}
            \leq
            \sqrt{T\alpha},
      \end{equation*}
      which gives the boundedness of $G(S_{\alpha})$. Besides, for any $\kappa > 0$, there exists $\delta := \frac{\kappa^{2}}{\alpha} > 0$ such that for any $t,s \in [0,T]$ with $|t-s| < \delta$, it holds that
      \begin{align*}
            |G(\varphi)(t)-G(\varphi)(s)|
            =
            \Big|\int_{s}^{t} \varphi(r) \diff{r}\Big|
            \leq
            \sqrt{|t-s|}
            \Big(\int_{s}^{t} |\varphi(r)|^{2}
            \diff{r}\Big)^{\frac{1}{2}}
            \leq
            \sqrt{|t-s|\alpha}
            <
            \kappa
      \end{align*}
      for all $\varphi \in S_{\alpha}$, which validates the equicontinuity of $G(S_{\alpha})$. Thus we complete the proof.
\end{proof}

The following lemma validates that the map $F$, given by \eqref{eq:mapF},
satisfies the first condition in Lemma \ref{lem:sufficientlemma}.
\begin{lemma}\label{lem:ratefunction}
      Suppose that Assumption \ref{eq:mainass} holds. Then for each $\alpha \in (0,\infty)$, the set
      \begin{equation*}
            \Big\{F\Big(\int_{0}^{\cdot}
            \varphi(s) \diff{s}\Big) \Big|
            \varphi \in S_{\alpha}\Big\}
      \end{equation*}
      is a compact subset in $C_{\phi}([-\tau,T];\R^{d})$.
\end{lemma}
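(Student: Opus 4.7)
The plan is to identify the set with $\{z^{\varphi} : \varphi \in S_{\alpha}\}$ via the definition \eqref{eq:mapF}, and to prove compactness by showing that the solution map $\varphi \mapsto z^{\varphi}$ is continuous from $S_{\alpha}$ (endowed with the weak topology of $L^{2}([0,T];\R^{m})$) to $C_{\phi}([-\tau,T];\R^{d})$. Since $S_{\alpha}$ is already compact under this weak topology, its continuous image must be compact. Concretely, I would take a sequence $\{\varphi_{n}\} \subset S_{\alpha}$ with $\varphi_{n} \rightharpoonup \varphi$ in $L^{2}$ (the limit $\varphi$ lies in $S_{\alpha}$ by weak lower semicontinuity of the norm) and show that $z^{\varphi_{n}} \to z^{\varphi}$ uniformly on $[-\tau,T]$.

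First I would apply Lemma \ref{lem:squarezvarphit} to obtain a uniform bound $M := \sup_{n} |z^{\varphi_{n}}|_{C_{\phi}([-\tau,T];\R^{d})}$ depending only on $\phi$, $K_{4}$, $T$, $\eta$ and $\alpha$. Combined with the growth bound \eqref{eq:bsigmasuperlinear}, this yields a constant $M'$ with $|b(s,z^{\varphi_{n}}(s),z^{\varphi_{n}}(s-\tau))| + |\sigma(s,z^{\varphi_{n}}(s),z^{\varphi_{n}}(s-\tau))| \leq M'$ uniformly in $s$ and $n$. Inserting this into the integral form of \eqref{eq:skeletoneq} and using the H\"older inequality gives, for $0 \leq s < t \leq T$,
\begin{equation*}
|z^{\varphi_{n}}(t)-z^{\varphi_{n}}(s)|
\leq M'(t-s) + M'\sqrt{\alpha}\sqrt{t-s},
\end{equation*}
which, together with the uniform continuity of $\phi$ on $[-\tau,0]$, delivers equicontinuity of $\{z^{\varphi_{n}}\}$ on $[-\tau,T]$. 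By the Arzel\`a--Ascoli theorem, any subsequence admits a further subsequence $\{z^{\varphi_{n_{k}}}\}$ converging uniformly to some $g \in C_{\phi}([-\tau,T];\R^{d})$.

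The main obstacle is identifying $g = z^{\varphi}$ by passing to the limit in the integrated version of \eqref{eq:skeletoneq} along $\{\varphi_{n_{k}}\}$. The drift term $\int_{0}^{t} b(s,z^{\varphi_{n_{k}}}(s),z^{\varphi_{n_{k}}}(s-\tau))\diff{s}$ converges to $\int_{0}^{t} b(s,g(s),g(s-\tau))\diff{s}$ uniformly in $t$ by continuity of $b$, the uniform bound $M$ and dominated convergence. The delicate term is $\int_{0}^{t} \sigma(s,z^{\varphi_{n_{k}}}(s),z^{\varphi_{n_{k}}}(s-\tau)) \varphi_{n_{k}}(s)\diff{s}$, which couples a strongly converging factor with a weakly converging control. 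I would split it as
\begin{equation*}
\int_{0}^{t} \big[\sigma(s,z^{\varphi_{n_{k}}}(s),z^{\varphi_{n_{k}}}(s-\tau))-\sigma(s,g(s),g(s-\tau))\big]\varphi_{n_{k}}(s)\diff{s} + \int_{0}^{t} \sigma(s,g(s),g(s-\tau))\varphi_{n_{k}}(s)\diff{s}.
\end{equation*}
The first piece is controlled by Cauchy--Schwarz by $\sqrt{\alpha}$ times the $L^{2}$ norm of the bracket, and this norm tends to $0$ by continuity of $\sigma$, the uniform bound $M$ on $\{z^{\varphi_{n_{k}}}\}$ and dominated convergence. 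The second piece converges to $\int_{0}^{t} \sigma(s,g(s),g(s-\tau))\varphi(s)\diff{s}$ by weak $L^{2}$ convergence of $\varphi_{n_{k}}$, since $\sigma(\cdot,g(\cdot),g(\cdot-\tau))\1_{[0,t]}(\cdot)$ is a fixed bounded, hence $L^{2}$, element.

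These limits show that $g$ satisfies the integral form of \eqref{eq:skeletoneq} with control $\varphi$, so $g = z^{\varphi}$ by the uniqueness in Lemma \ref{lem:skeletoneqexu}. Since every subsequence of $\{z^{\varphi_{n}}\}$ admits a further subsequence converging to this same limit, the whole sequence converges to $z^{\varphi}$ uniformly on $[-\tau,T]$. This yields the desired continuity of the solution map and hence the compactness of $\{F(\int_{0}^{\cdot}\varphi(s)\diff{s}) : \varphi \in S_{\alpha}\}$.
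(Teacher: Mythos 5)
Your proposal is correct, but it takes a genuinely different route from the paper.

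The paper proves continuity of $\varphi \mapsto z^{\varphi}$ directly and quantitatively: it estimates $\frac{\diff}{\diff t}|z^{\varphi_n}(t)-z^{\varphi}(t)|^2$ using the monotonicity condition \eqref{eq:couplemonocond} and Young's inequality, arriving at a Gronwall-type inequality whose inhomogeneity is the term $\Xi_n(t) = 2\int_0^t \langle z^{\varphi_n}-z^{\varphi},\, \sigma(\cdot,z^{\varphi},z^{\varphi}(\cdot-\tau))(\varphi_n-\varphi)\rangle\diff s$. It then integrates by parts to rewrite $\Xi_n$ in terms of $\xi_n(t) := \int_0^t\sigma(\cdot,z^{\varphi},z^{\varphi}(\cdot-\tau))(\varphi_n-\varphi)\diff s$ and proves $|\xi_n|_{C([0,T];\R^d)}\to 0$ via the weak convergence of $\varphi_n$ and the compactness of the operator $G$ from Lemma \ref{lem:compactG}. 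The end result is an explicit bound $|z^{\varphi_n}-z^{\varphi}|^2_{C_\phi([-\tau,T];\R^d)} \le C(|\xi_n|_{C([0,T];\R^d)} + |\xi_n|^2_{C([0,T];\R^d)})$.

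You instead argue softly: equiboundedness from Lemma \ref{lem:squarezvarphit}, equicontinuity from the growth bound \eqref{eq:bsigmasuperlinear} and H\"older, then Arzel\`a--Ascoli to extract a convergent subsequence, identification of the limit by passing to the limit in the integral equation (with the clean decomposition of the diffusion term into a strongly convergent piece and a weakly convergent piece), and finally uniqueness from Lemma \ref{lem:skeletoneqexu} together with a standard subsequence argument. Your argument never invokes the monotonicity condition \eqref{eq:couplemonocond} for the continuity step itself (it enters only through the a priori bound and uniqueness), so it is in that sense more elementary and more broadly applicable; the trade-off is that it is purely qualitative, while the paper's Gronwall estimate yields an explicit modulus of continuity for the solution map in terms of $|\xi_n|_{C([0,T];\R^d)}$. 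Both routes reduce the key difficulty to the same compactness mechanism for the integral operator $\varphi\mapsto\int_0^\cdot\sigma(\cdot,z^{\varphi},z^{\varphi}(\cdot-\tau))\varphi\diff s$ acting on a weakly convergent control.

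One minor remark on your write-up: the convergence of the second piece $\int_0^t\sigma(s,g(s),g(s-\tau))\varphi_{n_k}(s)\diff s \to \int_0^t\sigma(s,g(s),g(s-\tau))\varphi(s)\diff s$ is a priori only pointwise in $t$ (for each fixed $t$ you apply weak convergence against the fixed $L^2$ kernel $\sigma^*\1_{[0,t]}$). This is enough for your purposes since the integral equation is identified at each $t$ and both sides are continuous, but worth flagging since the drift piece and the first piece genuinely converge uniformly in $t$. The paper sidesteps this entirely by working with $|\xi_n|_{C([0,T];\R^d)}$ and Lemma \ref{lem:compactG}, which delivers uniform-in-$t$ convergence directly.
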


\begin{proof}
      By \eqref{eq:mapG}, $F \circ G$ is a map from $L^{2}([0,T];\R^{m})$ to $C_{\phi}([-\tau,T];\R^{d})$. We first show that the compound map $F \circ G$ is continuous from $S_{\alpha}$ to $C_{\phi}([-\tau,T];\R^{d})$. To this end, let $\{\varphi_{n}\}_{n \in \N} \subset S_{\alpha}$, $\varphi \in S_{\alpha}$ be such that $\varphi_{n} \to \varphi$ in $S_{\alpha}$ as $n \to \infty$. It follows from \eqref{eq:skeletoneq} that $F \circ G(\varphi) = z^{\varphi}$ and $F \circ G(\varphi^{n}) = z^{\varphi^{n}}$, which immediately shows that $z^{\varphi_{n}}(t)-z^{\varphi}(t) = 0$ for all $t \in [-\tau,0]$ and
      \begin{equation}\label{eq:zvnt.zvt}
      \begin{split}
            \frac{\diff{(z^{\varphi_{n}}(t)-z^{\varphi}(t))}}{\diff{t}}
            =&~
            b(t,z^{\varphi_{n}}(t),z^{\varphi_{n}}(t-\tau))
            -
            b(t,z^{\varphi}(t),z^{\varphi}(t-\tau))
            \\&~+
            \sigma(t,z^{\varphi_{n}}(t),z^{\varphi_{n}}(t-\tau))
            \varphi^{n}(t)
            -
            \sigma(t,z^{\varphi}(t),z^{\varphi}(t-\tau))
            \varphi(t),
            ~ t \in (0,T].
      \end{split}
      \end{equation}
      Applying \eqref{eq:couplemonocond}, the Schwarz inequality and the weighted Young inequality $ab \leq \eta a^{2} + \frac{b^{2}}{4\eta}$ for any $a,b \in \R$ leads to
      \begin{align*}
            &~\frac{1}{2}
            \frac{\diff{|z^{\varphi_{n}}(t)-z^{\varphi}(t)|^{2}}}{\diff{t}}
            =
            \Big\langle
            z^{\varphi_{n}}(t)-z^{\varphi}(t),
            \frac{\diff{(z^{\varphi_{n}}(t)-z^{\varphi}(t))}}{\diff{t}}
            \Big\rangle
            \\=&~
            \big\langle z^{\varphi_{n}}(t)-z^{\varphi}(t),
            b(t,z^{\varphi_{n}}(t),z^{\varphi_{n}}(t-\tau))
            -
            b(t,z^{\varphi}(t),z^{\varphi}(t-\tau)) \big\rangle
            \\&~+
            \big\langle z^{\varphi_{n}}(t)-z^{\varphi}(t),
            \big(\sigma(t,z^{\varphi_{n}}(t),z^{\varphi_{n}}(t-\tau))
            -
            \sigma(t,z^{\varphi}(t),z^{\varphi}(t-\tau))\big)
            \varphi^{n}(t) \big\rangle
            \\&~+
            \big\langle z^{\varphi_{n}}(t)-z^{\varphi}(t),
            \sigma(t,z^{\varphi}(t),z^{\varphi}(t-\tau))
            (\varphi^{n}(t)-\varphi(t)) \big\rangle
          \\\leq&~
            \langle z^{\varphi_{n}}(t)-z^{\varphi}(t),
            b(t,z^{\varphi_{n}}(t),z^{\varphi_{n}}(t-\tau))
            -
            b(t,z^{\varphi}(t),z^{\varphi}(t-\tau)) \rangle
            \\&~+
            \eta|\sigma(t,z^{\varphi_{n}}(t),z^{\varphi_{n}}(t-\tau))
            -
            \sigma(t,z^{\varphi}(t),z^{\varphi}(t-\tau))|^{2}
            +
            \tfrac{1}{4\eta}|z^{\varphi_{n}}(t)-z^{\varphi}(t)|^{2}
            |\varphi^{n}(t)|^{2}
            \\&~+
            \big\langle z^{\varphi_{n}}(t)-z^{\varphi}(t),
            \sigma(t,z^{\varphi}(t),z^{\varphi}(t-\tau))
            (\varphi^{n}(t)-\varphi(t)) \big\rangle
          \\\leq&~
            K_{2}\big(|z^{\varphi_{n}}(t)-z^{\varphi}(t)|^{2}
            + |z^{\varphi_{n}}(t-\tau)-z^{\varphi}(t-\tau)|^{2}\big)
            +
            \tfrac{1}{4\eta}|z^{\varphi_{n}}(t)-z^{\varphi}(t)|^{2}
            |\varphi^{n}(t)|^{2}
            \\&~+
            \big\langle z^{\varphi_{n}}(t)-z^{\varphi}(t),
            \sigma(t,z^{\varphi}(t),z^{\varphi}(t-\tau))
            (\varphi^{n}(t)-\varphi(t)) \big\rangle.
      \end{align*}
      It follows that
      \begin{equation}\label{eq:zvarphinzvarphi}
      \begin{split}
            |z^{\varphi_{n}}(t)-z^{\varphi}(t)|^{2}
            \leq
            \int_{0}^{t}
            \sup_{r \in [-\tau,s]}
            |z^{\varphi_{n}}(r)-z^{\varphi}(r)|^{2}
            \big(4K_{2} + \tfrac{|\varphi^{n}(s)|^{2}}
            {2\eta}\big)\diff{s} + \Xi_{n}(t),
      \end{split}
      \end{equation}
      where
      \begin{align*}
            \Xi_{n}(t)
            :=
            2\int_{0}^{t}\big\langle z^{\varphi_{n}}(s)-z^{\varphi}(s),
            \sigma(s,z^{\varphi}(s),z^{\varphi}(s-\tau))
            (\varphi^{n}(s)-\varphi(s)) \big\rangle \diff{s},
            \quad t \in [0,T].
      \end{align*}

      Next we will make an upper bound estimate for $\Xi_{n}(t)$. For this purpose, we define
      \begin{equation}\label{eq:xint}
            \xi_{n}(t)
            :=
            \int_{0}^{t}
            \sigma(s,z^{\varphi}(s),z^{\varphi}(s-\tau))
            (\varphi_{n}(s)-\varphi(s))\diff{s},
            \quad t \in [0,T].
      \end{equation}
      Applying the integration by parts formula
      and \eqref{eq:zvnt.zvt} yields
      \begin{equation}\label{eq:Xint}
      \begin{split}
            \Xi_{n}(t)
            =&~
            2\big\langle z^{\varphi_{n}}(s)-z^{\varphi}(s),
            \xi_{n}(s) \big\rangle\big|_{0}^{t}
            -
            2\int_{0}^{t}\big\langle
            (z^{\varphi_{n}}(s)-z^{\varphi}(s))^{'},
            \xi_{n}(s) \big\rangle \diff{s}
            \\=&~
            2\big\langle z^{\varphi_{n}}(t)-z^{\varphi}(t),
            \xi_{n}(t) \big\rangle
            \\&~-
            2\int_{0}^{t}\big\langle
            b(s,z^{\varphi_{n}}(s),z^{\varphi_{n}}(s-\tau))
            -
            b(s,z^{\varphi}(s),z^{\varphi}(s-\tau)),
            \xi_{n}(s) \big\rangle \diff{s}
            \\&~-
            2\int_{0}^{t}\big\langle
            \sigma(s,z^{\varphi_{n}}(s),z^{\varphi_{n}}(s-\tau))
            \varphi^{n}(s)
            -
            \sigma(s,z^{\varphi}(s),z^{\varphi}(s-\tau))
            \varphi(s),
            \xi_{n}(s) \big\rangle \diff{s}
            \\=:&~
            \Xi_{n}^{a}(t) + \Xi_{n}^{b}(t) + \Xi_{n}^{c}(t).
      \end{split}
      \end{equation}
      By the weighted Young inequality,
      \begin{align*}
            \Xi_{n}^{a}(t)
            =
            2\big\langle z^{\varphi_{n}}(t)-z^{\varphi}(t),
            \xi_{n}(t) \big\rangle
            \leq
            \tfrac{1}{2}|z^{\varphi_{n}}(t)-z^{\varphi}(t)|^{2}
            +
            2\sup_{s \in [0,t]}|\xi_{n}(s)|^{2}.
      \end{align*}
      For $\Xi_{n}^{b}(t)$ and $\Xi_{n}^{c}(t)$, one can use the Schwarz inequality, the H\"{o}lder inequality, \eqref{eq:bsigmasuperlinear}, Lemma \ref{lem:squarezvarphit} and $\{\varphi_{n}\}_{n \in \N} \subset S_{\alpha}$, $\varphi \in S_{\alpha}$ to obtain
      \begin{align*}
            \Xi_{n}^{b}(t)
            \leq&~
            2\sup_{s \in [0,t]}|\xi_{n}(s)|
            \int_{0}^{t}
            \big|b(s,z^{\varphi_{n}}(s),z^{\varphi_{n}}(s-\tau))\big|
            +
            \big|b(s,z^{\varphi}(s),z^{\varphi}(s-\tau))\big|\diff{s}
            \\\leq&~
            2K_{6}\sup_{s \in [0,t]}|\xi_{n}(s)|
            \int_{0}^{t} \big(2 + |z^{\varphi_{n}}(s)|^{q}
            +
            |z^{\varphi_{n}}(s-\tau)|^{q} + |z^{\varphi}(s)|^{q}
            +
            |z^{\varphi}(s-\tau)|^{q}\big)\diff{s}
            \\\leq&~
            C\sup_{s \in [0,t]}|\xi_{n}(s)|
      \end{align*}
      and
      \begin{align*}
            \Xi_{n}^{c}(t)
            \leq&~
            2\sup_{s \in [0,t]}|\xi_{n}(s)|
            \int_{0}^{t}\big| \sigma(s,z^{\varphi_{n}}(s),
            z^{\varphi_{n}}(s-\tau))\varphi^{n}(s)\big|
            +
            \big|\sigma(s,z^{\varphi}(s),z^{\varphi}(s-\tau))
            \varphi(s)\big| \diff{s}
            \\\leq&~
            2\sup_{s \in [0,t]}|\xi_{n}(s)|
            \Big\{\Big(\int_{0}^{t}| \sigma(s,z^{\varphi_{n}}(s),
            z^{\varphi_{n}}(s-\tau))|^{2}\diff{s}\Big)^{\frac{1}{2}}
            \Big(\int_{0}^{t}|\varphi^{n}(s)|^{2}\diff{s}\Big)^{\frac{1}{2}}
            \\&~+
            \Big(\int_{0}^{t}
            |\sigma(s,z^{\varphi}(s),z^{\varphi}(s-\tau))|^{2} \diff{s}\Big)^{\frac{1}{2}}
            \Big(\int_{0}^{t}|\varphi(s)|^{2} \diff{s}\Big)^{\frac{1}{2}}\Big\}
            \\\leq&~
            2K_{6}\sqrt{\alpha}\sup_{s \in [0,t]}|\xi_{n}(s)|
            \Big\{\Big(\int_{0}^{t}
            1 + |z^{\varphi_{n}}(s)|^{2q}
            + |z^{\varphi_{n}}(s-\tau)|^{2q}
            \diff{s}\Big)^{\frac{1}{2}}
            \\&~+
            \Big(\int_{0}^{t}
            1 + |z^{\varphi}(s)|^{2q}
            + |z^{\varphi}(s-\tau)|^{2q}
            \diff{s}\Big)^{\frac{1}{2}}\Big\}
            \\\leq&~
            C\sup_{s \in [0,t]}|\xi_{n}(s)|.
      \end{align*}
      Substituting the estimates on $\Xi_{n}^{a}(t)$,
      $\Xi_{n}^{b}(t)$ and $\Xi_{n}^{c}(t)$ into \eqref{eq:Xint}, one has
      \begin{equation*}
            \Xi_{n}(t)
            \leq
            \tfrac{1}{2}|z^{\varphi_{n}}(t)-z^{\varphi}(t)|^{2}
            +
            C|\xi_{n}|_{C([0,T];\R^{d})}^{2}
            +
            C|\xi_{n}|_{C([0,T];\R^{d})},
            \quad t \in [0,T],
      \end{equation*}
      which together with \eqref{eq:zvarphinzvarphi} leads to
      \begin{equation*}
      \begin{split}
            \sup_{r \in [-\tau,t]}
            |z^{\varphi_{n}}(r)&~-z^{\varphi}(r)|^{2}
            \leq
            C\big(|\xi_{n}|_{C([0,T];\R^{d})}
            +
            |\xi_{n}|_{C([0,T];\R^{d})}^{2}\big)
            \\&~+
            \int_{0}^{t}
            \sup_{r \in [-\tau,s]}
            |z^{\varphi_{n}}(r)-z^{\varphi}(r)|^{2}
            \big(8K_{2} + |\varphi^{n}(s)|^{2}/\eta\big)\diff{s}.
      \end{split}
      \end{equation*}
      It follows from the Gronwall inequality and $\{\varphi_{n}\}_{n \in \N} \subset S_{\alpha}$ that
      \begin{equation*}
      \begin{split}
            \sup_{r \in [-\tau,t]}
            |z^{\varphi_{n}}(r)-z^{\varphi}(r)|^{2}
            \leq&~
            C\big(|\xi_{n}|_{C([0,T];\R^{d})}
            +
            |\xi_{n}|_{C([0,T];\R^{d})}^{2}\big)
            \exp\Big(\int_{0}^{t}
                8K_{2} + \tfrac{|\varphi^{n}(s)|^{2}}{\eta}
            \diff{s}\Big)
            \\\leq&~
            Ce^{8K_{2}T+\tfrac{\alpha}{\eta}}
            \big(|\xi_{n}|_{C([0,T];\R^{d})}
            +
            |\xi_{n}|_{C([0,T];\R^{d})}^{2}\big),
            \quad t \in [0,T],
      \end{split}
      \end{equation*}
      which implies
      \begin{equation}\label{eq:zvarnzvar2}
            |z^{\varphi_{n}}-z^{\varphi}|_{
            C_{\phi}([-\tau,T];\R^{d})}^{2}
            \leq
            Ce^{8K_{2}T+\tfrac{\alpha}{\eta}}
            \big(|\xi_{n}|_{C([0,T];\R^{d})}
            +
            |\xi_{n}|_{C([0,T];\R^{d})}^{2}\big).
      \end{equation}
      The continuity of $F \circ G$ will be established once we show $\lim\limits_{n \to \infty}|\xi_{n}|_{C([0,T];\R^{d})} = 0$. Indeed, for any $h \in L^{2}([0,T];\R^{d})$, one can use \eqref{eq:bsigmasuperlinear}, Lemma \ref{lem:squarezvarphit} and $\varphi \in S_{\alpha}$ to show
      \begin{equation*}
      \begin{split}
            &~\int_{0}^{T}
                |\sigma(s,z^{\varphi}(s),
                z^{\varphi}(s-\tau))^{*}h(s)|^{2}
            \diff{s}
            \\\leq&~
            C\int_{0}^{T}
                (1 + |z^{\varphi}(s)|^{2q}
                   + |z^{\varphi}(s-\tau)|^{2q})|h(s)|^{2}
            \diff{s}
            \leq
            C|h|_{L^{2}([0,T];\R^{d})}^{2}
            < \infty,
      \end{split}
      \end{equation*}
      which together with $\varphi_{n} \to \varphi$ in $S_{\alpha}$ as $n \to \infty$ yields
      \begin{equation}\label{eq:410410410}
      \begin{split}
            &~\lim_{n \to \infty}
            \big\langle \sigma(\cdot,z^{\varphi}(\cdot),
            z^{\varphi}(\cdot-\tau))(\varphi_{n}-\varphi),
            h \big\rangle_{L^{2}([0,T];\R^{d})}
            \\=&~
            \lim_{n \to \infty}
            \big\langle \varphi_{n}-\varphi,
            \sigma(\cdot,z^{\varphi}(\cdot),z^{\varphi}(\cdot-\tau))^{*}h
            \big\rangle_{L^{2}([0,T];\R^{m})}
            =
            0.
      \end{split}
      \end{equation}
      That is to say, $\sigma(\cdot,z^{\varphi}(\cdot),
      z^{\varphi}(\cdot-\tau))\varphi_{n} \to \sigma(\cdot,z^{\varphi}(\cdot),z^{\varphi}(\cdot-\tau))\varphi$ in $L^{2}([0,T];\R^{d})$ with respect to its weak topology as $n \to \infty$. In combination with Lemma \ref{lem:compactG}, we have $\lim\limits_{n \to \infty}|\xi_{n}|_{C([0,T];\R^{d})} = 0$, which together with \eqref{eq:zvarnzvar2} yields the continuity $F \circ G$. Finally, for each $\alpha \in (0,\infty)$, the compactness of $\{F \circ G(\varphi):\varphi \in S_{\alpha}\}$ follows from the compactness of $S_{\alpha}$ and the continuity of $F \circ G$. Thus the proof is complete.
\end{proof}

We now state the following stochastic Gronwall lemma (see \cite[Theorem 4]{scheutzow2013stochastic}), which is a key tool in obtaining the uniform moment estimate for the stochastic controlled equation \eqref{eq:Yvaruvart}.

\begin{lemma}\label{lem:stochasticgronwall}
      Let $Z$ and $H$ be nonnegative, adapted processes with continuous paths and assume that $\psi$ is nonnegative and progressively measurable. Let $M$ be a continuous local martingale starting at zero. If
      \begin{equation*}
            Z(t)
            \leq
            H(t)
            +
            \int_{0}^{t}
                Z(s)\psi(s)
            \diff{s}
            +
            M(t)
      \end{equation*}
      holds for all $t \geq 0$, then for $\tilde{p} \in (0,1)$, and $\mu,\nu > 1$ such that $\frac{1}{\mu} + \frac{1}{\nu} = 1$ and $\tilde{p}\nu < 1$, we have
      \begin{equation*}
            \E\Big[\sup_{s \in [0,t]}
            Z^{\tilde{p}}(s)\Big]
            \leq
            (c_{\tilde{p}\nu}+1)^{1/\nu}
            \Big\{\E\Big[\exp\Big(\tilde{p}
            \mu\int_{0}^{t}\psi(s)\diff{s}
            \Big)\Big]\Big\}^{1/\mu}
            \Big\{\E\Big[\Big(\sup_{s \in [0,t]}H(s)\Big)^{\tilde{p}\nu}\Big]\Big\}^{1/\nu}
      \end{equation*}
      with $c_{\tilde{p}\nu} :=
      \Big(4 \wedge \frac{1}{\tilde{p}\nu}\Big)
      \frac{\pi \tilde{p}\nu}{\sin(\pi \tilde{p}\nu)}$.
\end{lemma}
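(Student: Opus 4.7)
The plan is to reduce the stochastic Gronwall inequality to a purely pathwise bound of the form $E(t)Z(t) \leq V(t) + N(t)$, where $E(t) := \exp(-\int_0^t \psi(s)\,ds)$ discounts the Gronwall-type drift, $V(t) := \sup_{s \in [0,t]} H(s)$ is the running maximum of $H$, and $N$ is a continuous local martingale starting at $0$. After a standard localization of $M$ (so that we may treat $M$ as a true martingale along a reducing sequence and recover the general statement via Fatou's lemma), this reduction is obtained from an integration-by-parts computation. Once the pathwise bound is in hand, $\tilde p$-th moments are handled by subadditivity, Hölder's inequality, and a single nontrivial maximal inequality for $N$ in $L^{\tilde p\nu}$; the constant $c_{\tilde p\nu}$ enters exactly at that last step.

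First I would introduce $U(t) := V(t) + \int_0^t Z(s)\psi(s)\,ds + M(t)$, for which the hypothesis gives $Z(t) \leq U(t)$. Because $V$ is continuous nondecreasing and $M$ is a continuous local martingale, $U$ is a continuous semimartingale. Integration by parts applied to the finite-variation process $E$ and the semimartingale $U$ yields
\[
d(EU) \;=\; E\,dU \;-\; EU\,\psi\,dt \;=\; E\,dV \;+\; E(Z-U)\psi\,dt \;+\; E\,dM,
\]
and the drift correction $E(Z-U)\psi$ is nonpositive because $Z \leq U$ and $E,\psi \geq 0$. Integrating and using $E \leq 1$ together with the monotonicity of $V$ gives
\[
E(t)U(t) \;\leq\; V(0) + \int_0^t E(s)\,dV(s) + N(t) \;\leq\; V(t) + N(t), \qquad N(t) := \int_0^t E(s)\,dM(s),
\]
so in particular $E(t)Z(t) \leq V(t) + N(t)$ pathwise.

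Next, since $A(t) := \int_0^t \psi(s)\,ds$ and $V$ are both nondecreasing, for every $s \in [0,t]$ one has
\[
Z(s) \;\leq\; e^{A(s)}\bigl(V(s)+N(s)\bigr) \;\leq\; e^{A(t)}\bigl(V(t) + N^*(t)\bigr), \qquad N^*(t) := \sup_{r \in [0,t]} N(r) \;\geq\; 0.
\]
Taking the supremum in $s$, raising to the power $\tilde p \in (0,1)$, and using the subadditivity $(a+b)^{\tilde p} \leq a^{\tilde p}+b^{\tilde p}$ valid for $\tilde p \in (0,1]$ and $a,b \geq 0$, I would apply Hölder with exponents $\mu,\nu$ and subadditivity once more (now with exponent $\tilde p\nu < 1$) to obtain
\[
\E\Bigl[\sup_{s \in [0,t]}Z(s)^{\tilde p}\Bigr] \;\leq\; \bigl\{\E[\exp(\tilde p\mu A(t))]\bigr\}^{1/\mu} \bigl\{\E[V(t)^{\tilde p\nu}] + \E[(N^*(t))^{\tilde p\nu}]\bigr\}^{1/\nu}.
\]

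The step I expect to be the main obstacle is the maximal inequality
\[
\E\bigl[(N^*(t))^{\tilde p\nu}\bigr] \;\leq\; c_{\tilde p\nu}\,\E\bigl[V(t)^{\tilde p\nu}\bigr],
\]
which, combined with the trivial bound $\E[V^{\tilde p\nu}]+\E[(N^*)^{\tilde p\nu}] \leq (1+c_{\tilde p\nu})\E[V^{\tilde p\nu}]$, produces the announced prefactor $(c_{\tilde p\nu}+1)^{1/\nu}$. The relevant structural input is: $N(0)=0$, $N$ is a continuous local martingale, and $N$ satisfies the one-sided bound $N(s) \geq -V(s)$ for all $s$ (immediate from $E(s)Z(s) \geq 0$ combined with $E(s)Z(s) \leq V(s)+N(s)$), while $V$ is nondecreasing. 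I would prove this estimate by a Lenglart-style domination argument: after reducing to a true martingale along the localization already fixed, one checks that $\E[|N(\tau)|] \leq 2\E[V(\tau)]$ for every bounded stopping time $\tau$ (use $|N| \leq N + 2V$ on the event $\{N+V\geq 0\}$ together with $\E[N(\tau)]=0$), and then applies a good-$\lambda$/Lenglart inequality to pass from this expectation control to the $L^{\tilde p\nu}$ bound on $N^*$. The delicate point is that the \emph{sharp} constant $c_{\tilde p\nu} = (4 \wedge (\tilde p\nu)^{-1})\,\pi\tilde p\nu/\sin(\pi\tilde p\nu)$, rather than the cruder Lenglart constant $(2-\tilde p\nu)/(1-\tilde p\nu)$, is extracted through a careful optimization (equivalently, via the sharp Burkholder--Davis--Gundy-type constant for nonnegative local submartingales); this refined constant is the technical core of Scheutzow's argument. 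Once this inequality is in hand, removing the localization by Fatou's lemma completes the proof.
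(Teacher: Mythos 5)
A point of reference first: the paper does not prove this lemma at all --- it is quoted verbatim from \cite[Theorem 4]{scheutzow2013stochastic} --- so the comparison is necessarily with Scheutzow's proof rather than with anything internal to the paper. Your reduction is essentially his: discount by $E(t)=\exp(-\int_0^t\psi(s)\diff{s})$, use integration by parts together with $Z\le U$ to obtain the pathwise bound $E(t)Z(t)\le V(t)+N(t)$, where $V(t)=\sup_{s\in[0,t]}H(s)$ and $N=\int_0^{\cdot}E(s)\diff{M(s)}$ is a continuous local martingale vanishing at zero with $N\ge -V$, and then conclude by H\"older and subadditivity of $x\mapsto x^{\tilde{p}\nu}$. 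These steps are correct (the sign of the drift correction, the use of $E\le 1$, and the positivity $V(s)+N(s)\ge E(s)Z(s)\ge 0$ all check out), and they correctly isolate the single nontrivial ingredient: the maximal inequality $\E[(\sup_{s\in[0,t]}N(s))^{\tilde{p}\nu}]\le c_{\tilde{p}\nu}\,\E[V(t)^{\tilde{p}\nu}]$.

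The gap is that this ingredient, with the specific constant $c_{\tilde{p}\nu}=(4\wedge\frac{1}{\tilde{p}\nu})\frac{\pi\tilde{p}\nu}{\sin(\pi\tilde{p}\nu)}$ which is part of the statement, is exactly what you do not prove. The Lenglart/good-$\lambda$ route you sketch is sound as far as it goes ($\E[|N(\tau)|]\le 2\E[V(\tau)]$ for bounded stopping times is correct since $N^-\le V$ and $\E[N(\tau)]=0$ after localization), but it delivers the domination constant $\frac{2-\tilde{p}\nu}{1-\tilde{p}\nu}$ (up to a factor $2^{\tilde{p}\nu}$), and no amount of optimization inside that argument yields $\frac{\pi\tilde{p}\nu}{\sin(\pi\tilde{p}\nu)}$. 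In Scheutzow's paper the sharp constant comes from a separate inequality comparing the running maximum of a continuous local martingale started at $0$ with its lower envelope, obtained by optional stopping/time change and Brownian exit probabilities of the form $b/(a+b)$; these produce a tail of order $1/(1+x)$ for the ratio of the maximum to the lower bound, and hence the Beta integral $\int_0^\infty x^{\tilde{p}\nu}(1+x)^{-2}\diff{x}=\frac{\pi\tilde{p}\nu}{\sin(\pi\tilde{p}\nu)}$ --- not a BDG-type constant as your parenthetical suggests. So, as a proof of the lemma as stated, the core estimate is missing; as a proof with \emph{some} finite constant depending only on $\tilde{p}\nu$ --- which is all that is actually used downstream in Lemma \ref{lem:momentbound} --- your argument is complete once the localization/Fatou step is carried out as you indicate.
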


Now we are ready to prove the uniform moment estimate for the solution of the stochastic controlled equation \eqref{eq:Yvaruvart}.
\begin{lemma}\label{lem:momentbound}
      Suppose that Assumption \ref{eq:mainass} holds and let $\{u^{\varepsilon}\}_{\varepsilon > 0} \subset \mathcal{A}_{\alpha}$ for some $\alpha \in (0,\infty)$.
      Let $\{Y^{\varepsilon,u^{\varepsilon}}(t)\}_{t \in [-\tau,T]}$ be the unique strong solution of the stochastic controlled equation  \eqref{eq:Yvaruvart} with the controlled process $u^{\varepsilon}$.
      Then for any $p \in [2,\eta+1)$, there exists a constant $C := C(K_{4},T,\tau,p,\alpha) > 0$ such that
      \begin{equation}\label{eq:momentbound}
            \sup_{\varepsilon \in (0,\frac{1}{2})}
            \E\Big[\sup_{s \in [0,T]} |Y^{\varepsilon,
            u^{\varepsilon}}(s)|^{p}\Big]
            \leq
            C.
      \end{equation}
\end{lemma}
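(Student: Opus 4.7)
The plan is to apply It\^o's formula to $|Y^{\varepsilon,u^{\varepsilon}}(t)|^{p'}$ for some auxiliary exponent $p'\in(p,\eta+1)$, absorb the $|\sigma|^{2}$ contributions using the coercivity condition \eqref{eq:globalcoercivitycond} and the smallness $\varepsilon<1/2$, and then invoke the stochastic Gronwall lemma (Lemma \ref{lem:stochasticgronwall}) iteratively on the delay intervals $[k\tau,(k+1)\tau]$, $k=0,1,\ldots,N-1$, with $N=\lceil T/\tau\rceil$.

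Concretely, It\^o's formula produces drift contributions from $b$, from the control $\sigma u^{\varepsilon}$, and the It\^o correction $\tfrac{\varepsilon p'(p'-1)}{2}|Y^{\varepsilon,u^{\varepsilon}}|^{p'-2}|\sigma|^{2}$. By \eqref{eq:globalcoercivitycond} the $b$-term is dominated by $p'K_{4}|Y^{\varepsilon,u^{\varepsilon}}|^{p'-2}(1+|Y^{\varepsilon,u^{\varepsilon}}|^{2}+|Y^{\varepsilon,u^{\varepsilon}}(\cdot-\tau)|^{2})-\tfrac{p'\eta}{2}|Y^{\varepsilon,u^{\varepsilon}}|^{p'-2}|\sigma|^{2}$, and a weighted Young's inequality gives $p'|Y^{\varepsilon,u^{\varepsilon}}|^{p'-1}|\sigma||u^{\varepsilon}|\le \tfrac{p'}{2\lambda}|Y^{\varepsilon,u^{\varepsilon}}|^{p'-2}|\sigma|^{2}+\tfrac{p'\lambda}{2}|Y^{\varepsilon,u^{\varepsilon}}|^{p'}|u^{\varepsilon}|^{2}$. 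Because $\varepsilon<1/2$ and $p'<\eta+1$, I would select $\lambda$ so that the aggregate coefficient of $|Y^{\varepsilon,u^{\varepsilon}}|^{p'-2}|\sigma|^{2}$ is nonpositive, arriving at the key inequality
\begin{equation*}
|Y^{\varepsilon,u^{\varepsilon}}(t)|^{p'}\le H(t)+\int_{0}^{t}|Y^{\varepsilon,u^{\varepsilon}}(s)|^{p'}\psi(s)\,ds+M(t),
\end{equation*}
with $H(t)=|\phi(0)|^{p'}+C\int_{0}^{t}(1+|Y^{\varepsilon,u^{\varepsilon}}(s-\tau)|^{p'})\,ds$, $\psi(s)=C(1+|u^{\varepsilon}(s)|^{2})$, and $M$ a continuous local martingale starting at $0$.

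This is exactly the form required by Lemma \ref{lem:stochasticgronwall}. Applying it with $\tilde p=p/p'\in(0,1)$ and conjugate exponents $\mu,\nu>1$ such that $\tilde p\nu<1$, the constraint $u^{\varepsilon}\in\mathcal{A}_{\alpha}$ yields $\int_{0}^{T}\psi\le CT+C\alpha$ $\P$-a.s., so the exponential factor $\{\E[\exp(\tilde p\mu\int_{0}^{T}\psi\,ds)]\}^{1/\mu}$ is dominated by a constant depending only on $(K_{4},T,\tau,p,\alpha)$, uniformly in $\varepsilon\in(0,1/2)$. It then remains to bound $\{\E[(\sup H)^{\tilde p\nu}]\}^{1/\nu}$. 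On the base interval $[0,\tau]$, $|Y^{\varepsilon,u^{\varepsilon}}(s-\tau)|=|\phi(s-\tau)|$ is deterministic, hence so is $\sup H$, and Lemma \ref{lem:stochasticgronwall} directly delivers $\E[\sup_{s\in[0,\tau]}|Y^{\varepsilon,u^{\varepsilon}}(s)|^{q}]\le C$ for every $q\in[2,\eta+1)$. On the subsequent interval $[k\tau,(k+1)\tau]$ with $k\ge 1$, $\sup H$ is controlled by $\sup_{s\in[(k-1)\tau,k\tau]}|Y^{\varepsilon,u^{\varepsilon}}(s)|^{p'}$ plus a deterministic term, so $\{\E[(\sup H)^{\tilde p\nu}]\}^{1/\nu}$ reduces to a $p'\tilde p\nu=p\nu$-moment on the preceding interval, supplied by the induction hypothesis. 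Chaining over the $N$ intervals yields the desired uniform estimate.

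The main obstacle is the consistent choice of the exponents so that the backward induction closes. The moment carried to the preceding interval is inflated by the factor $\nu>1$, so after $N$ iterations the base interval must support a moment up to roughly $p\nu^{N-1}$, which must stay strictly below $\eta+1$ for the It\^o/Young cancellation of $|Y^{\varepsilon,u^{\varepsilon}}|^{p'-2}|\sigma|^{2}$ to survive (with $p'$ slightly larger than this moment). Since $N$ is finite and $\nu>1$ may be chosen arbitrarily close to $1$ with $p'$ arbitrarily close to $\eta+1$, the assumption $p<\eta+1$ affords exactly the slack required, which explains the upper bound $\eta+1$ in the statement. Uniformity in $\varepsilon\in(0,1/2)$ is built in, since the only $\varepsilon$-dependence in the estimates arises through the factor $\varepsilon(p'-1)$ in the $|\sigma|^{2}$ coefficient, uniformly dominated by $(p'-1)/2$.
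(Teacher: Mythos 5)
Your proposal is correct and follows essentially the same route as the paper: It\^o's formula at an auxiliary exponent strictly below $\eta+1$, absorption of the $|\sigma|^{2}$ contributions through the coercivity condition uniformly for $\varepsilon\in(0,\tfrac{1}{2})$, the stochastic Gronwall lemma combined with the deterministic bound $\int_{0}^{T}|u^{\varepsilon}(s)|^{2}\diff{s}\leq\alpha$, and an iteration over the delay intervals. The only difference is bookkeeping: the paper keeps the exponent inflation additive (increments $\bar{\delta}=\delta/([T/\tau]+1)$, using Jensen's inequality so the delayed variable only needs exponent $\bar{p}+\bar{\delta}$ at each step), whereas you let the required moment grow multiplicatively by $\nu$ per interval and close the induction by taking $\nu$ close to $1$ and $p'$ close to $\eta+1$ --- both exploit exactly the slack $p<\eta+1$.
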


\begin{proof}
      Let $\delta \in (0,1)$ be an arbitrarily small constant such that $\eta+1-\delta > 2$. It suffices to show that for any $p \in [2,\eta+1-\delta]$, there exists a constant $C := C(K_{4},T,\tau,\delta,p,\alpha) > 0$ such that
      \begin{equation}\label{eq:momentboundauxi}
            \sup_{\varepsilon \in (0,\frac{1}{2})}
            \E\Big[\sup_{s \in [0,T]} |Y^{\varepsilon,
            u^{\varepsilon}}(s)|^{p}\Big]
            \leq
            C.
      \end{equation}
      In fact, since letting $\bar{\delta}:=\frac{\delta}{[T/\tau]+1}$ yields $\eta+1-\bar{\delta} > \eta+1-\delta > 2$, one can take any $\bar{p} \in [2,\eta+1-\bar{\delta}]$. By the It\^{o} formula, the Schwarz inequality, the weighted Young inequality $ab \leq \kappa a^{2} + \frac{b^{2}}{4\kappa}$ for any $a,b \in \R$ with $\kappa = \frac{\bar{p}+\bar{\delta}-1}{2}(1-\varepsilon) > 0$ and $\varepsilon \in (0,\frac{1}{2})$, we have
      \begin{align*}
            &~|Y^{\varepsilon,u^{\varepsilon}}(t)|^{\bar{p}+\bar{\delta}}
            \\=&~
            |\phi(0)|^{\bar{p}+\bar{\delta}}
            +
            (\bar{p}+\bar{\delta})\int_{0}^{t}
            |Y^{\varepsilon,u^{\varepsilon}}(s)|^{\bar{p}+\bar{\delta}-2}
            \big\langle Y^{\varepsilon,u^{\varepsilon}}(s),
            b(s,Y^{\varepsilon,u^{\varepsilon}}(s),
            Y^{\varepsilon,u^{\varepsilon}}(s-\tau))
            \big\rangle \diff{s}
            \\&~+
            (\bar{p}+\bar{\delta})\int_{0}^{t}
            |Y^{\varepsilon,u^{\varepsilon}}(s)|^{\bar{p}+\bar{\delta}-2}
            \big\langle Y^{\varepsilon,u^{\varepsilon}}(s),
            \sigma(s,Y^{\varepsilon,u^{\varepsilon}}(s),
            Y^{\varepsilon,u^{\varepsilon}}(s-\tau))
            u^{\varepsilon}(s) \big\rangle \diff{s}
            \\&~+
            \tfrac{(\bar{p}+\bar{\delta})\varepsilon}{2}\int_{0}^{t}
            |Y^{\varepsilon,u^{\varepsilon}}(s)|^{\bar{p}+\bar{\delta}-2}
            |\sigma(s,Y^{\varepsilon,u^{\varepsilon}}(s),
            Y^{\varepsilon,u^{\varepsilon}}(s-\tau))|^{2} \diff{s}
            \\&~+
            \tfrac{(\bar{p}+\bar{\delta})(\bar{p}+\bar{\delta}-2)\varepsilon}{2}
            \int_{0}^{t}
            |Y^{\varepsilon,u^{\varepsilon}}(s)|^{\bar{p}+\bar{\delta}-4}
            |Y^{\varepsilon,u^{\varepsilon}}(s)^{*}
            \sigma(s,Y^{\varepsilon,u^{\varepsilon}}(s),
            Y^{\varepsilon,u^{\varepsilon}}(s-\tau))|^{2} \diff{s}
            \\&~+
            (\bar{p}+\bar{\delta})\sqrt{\varepsilon}\int_{0}^{t}
            |Y^{\varepsilon,u^{\varepsilon}}(s)|^{\bar{p}+\bar{\delta}-2}
            \big\langle Y^{\varepsilon,u^{\varepsilon}}(s),
            \sigma(s,Y^{\varepsilon,u^{\varepsilon}}(s),
            Y^{\varepsilon,u^{\varepsilon}}(s-\tau))\diff{W(s)} \big\rangle
            \\\leq&~
            |\phi(0)|^{\bar{p}+\bar{\delta}}
            +
            (\bar{p}+\bar{\delta})\int_{0}^{t}
            |Y^{\varepsilon,u^{\varepsilon}}(s)|^{\bar{p}+\bar{\delta}-2}
            \big(\big\langle Y^{\varepsilon,u^{\varepsilon}}(s),
            b(s,Y^{\varepsilon,u^{\varepsilon}}(s),
            Y^{\varepsilon,u^{\varepsilon}}(s-\tau))\big\rangle
            \\&~+
            \tfrac{(\bar{p}+\bar{\delta}-1)\varepsilon}{2}
            |\sigma(s,Y^{\varepsilon,u^{\varepsilon}}(s),
            Y^{\varepsilon,u^{\varepsilon}}(s-\tau))|^{2}
            \\&~+
            \big\langle Y^{\varepsilon,u^{\varepsilon}}(s),
            \sigma(s,Y^{\varepsilon,u^{\varepsilon}}(s),
            Y^{\varepsilon,u^{\varepsilon}}(s-\tau))
            u^{\varepsilon}(s) \big\rangle \big)\diff{s}
            \\&~+
            (\bar{p}+\bar{\delta})\sqrt{\varepsilon}\int_{0}^{t}
            |Y^{\varepsilon,u^{\varepsilon}}(s)|^{\bar{p}+\bar{\delta}-2}
            \big\langle Y^{\varepsilon,u^{\varepsilon}}(s),
            \sigma(s,Y^{\varepsilon,u^{\varepsilon}}(s),
            Y^{\varepsilon,u^{\varepsilon}}(s-\tau))\diff{W(s)} \big\rangle
            \\\leq&~
            |\phi(0)|^{\bar{p}+\bar{\delta}}
            +
            (\bar{p}+\bar{\delta})\int_{0}^{t}
            |Y^{\varepsilon,u^{\varepsilon}}(s)|^{\bar{p}+\bar{\delta}-2}
            \big(\big\langle Y^{\varepsilon,u^{\varepsilon}}(s),
            b(s,Y^{\varepsilon,u^{\varepsilon}}(s),
            Y^{\varepsilon,u^{\varepsilon}}(s-\tau))\big\rangle
            \\&~+
            \tfrac{\bar{p}+\bar{\delta}-1}{2}
            |\sigma(s,Y^{\varepsilon,u^{\varepsilon}}(s),
            Y^{\varepsilon,u^{\varepsilon}}(s-\tau))|^{2}\big)\diff{s}
            +
            \tfrac{\bar{p}+\bar{\delta}}
            {2(\bar{p}+\bar{\delta}-1)(1-\varepsilon)}
            \int_{0}^{t}|Y^{\varepsilon,
            u^{\varepsilon}}(s)|^{\bar{p}+\bar{\delta}}
            |u^{\varepsilon}(s)|^{2}\diff{s}
            \\&~+
            (\bar{p}+\bar{\delta})\sqrt{\varepsilon}\int_{0}^{t}
            |Y^{\varepsilon,u^{\varepsilon}}(s)|^{\bar{p}+\bar{\delta}-2}
            \big\langle Y^{\varepsilon,u^{\varepsilon}}(s),
            \sigma(s,Y^{\varepsilon,u^{\varepsilon}}(s),
            Y^{\varepsilon,u^{\varepsilon}}(s-\tau))\diff{W(s)} \big\rangle.
      \end{align*}
      Noting $\bar{p} \in [2,\eta+1-\bar{\delta}]$ and applying \eqref{eq:globalcoercivitycond} as well as the weighted Young inequality yield
      \begin{align*}
            &~|Y^{\varepsilon,u^{\varepsilon}}(t)|^{\bar{p}+\bar{\delta}}
      \\\leq&~
            |\phi(0)|^{\bar{p}+\bar{\delta}}
            +
            \tfrac{1}{1-\varepsilon}\int_{0}^{t}
            |Y^{\varepsilon,u^{\varepsilon}}(s)|^{\bar{p}+\bar{\delta}}
            |u^{\varepsilon}(s)|^{2} \diff{s}
        \\&~+
            (\bar{p}+\bar{\delta})K_{4}\int_{0}^{t}
            |Y^{\varepsilon,u^{\varepsilon}}(s)|^{\bar{p}+\bar{\delta}-2}
            \big(1 + |Y^{\varepsilon,u^{\varepsilon}}(s)|^{2}
            +
            |Y^{\varepsilon,u^{\varepsilon}}(s-\tau)|^{2}\big)\diff{s}
        \\&~+
            (\bar{p}+\bar{\delta})\sqrt{\varepsilon}\int_{0}^{t}
            |Y^{\varepsilon,u^{\varepsilon}}(s)|^{\bar{p}+\bar{\delta}-2}
            \big\langle Y^{\varepsilon,u^{\varepsilon}}(s),
            \sigma(s,Y^{\varepsilon,u^{\varepsilon}}(s),
            Y^{\varepsilon,u^{\varepsilon}}(s-\tau))\diff{W(s)} \big\rangle
      \\\leq&~
            |\phi(0)|^{\bar{p}+\bar{\delta}}
            +
            \tfrac{1}{1-\varepsilon}\int_{0}^{t}
            |Y^{\varepsilon,u^{\varepsilon}}(s)|^{\bar{p}+\bar{\delta}}
            |u^{\varepsilon}(s)|^{2} \diff{s}
            +
            (\bar{p}+\bar{\delta})K_{4}\int_{0}^{t}
            \big(\tfrac{2}{\bar{p}+\bar{\delta}}
            +
            \tfrac{\bar{p}+\bar{\delta}-2}{\bar{p}+\bar{\delta}}
            |Y^{\varepsilon,u^{\varepsilon}}(s)|^{\bar{p}+\bar{\delta}}
            \\&+
            |Y^{\varepsilon,u^{\varepsilon}}(s)|^{\bar{p}+\bar{\delta}}
            +
            \tfrac{\bar{p}+\bar{\delta}-2}{\bar{p}+\bar{\delta}}
            |Y^{\varepsilon,u^{\varepsilon}}(s)|^{\bar{p}+\bar{\delta}}
            +
            \tfrac{2}{\bar{p}+\bar{\delta}}
            |Y^{\varepsilon,u^{\varepsilon}}
            (s-\tau)|^{\bar{p}+\bar{\delta}}\big)\diff{s}
        \\&~+
            (\bar{p}+\bar{\delta})\sqrt{\varepsilon}\int_{0}^{t}
            |Y^{\varepsilon,u^{\varepsilon}}(s)|^{\bar{p}+\bar{\delta}-2}
            \big\langle Y^{\varepsilon,u^{\varepsilon}}(s),
            \sigma(s,Y^{\varepsilon,u^{\varepsilon}}(s),
            Y^{\varepsilon,u^{\varepsilon}}(s-\tau))\diff{W(s)}
            \big\rangle
      \\\leq&~
            |\phi(0)|^{\bar{p}+\bar{\delta}}
            +
            2K_{4}\int_{0}^{t}
            (1 + |Y^{\varepsilon,u^{\varepsilon}}
            (s-\tau)|^{\bar{p}+\bar{\delta}})\diff{s}
            +
            \int_{0}^{t}
            |Y^{\varepsilon,u^{\varepsilon}}(s)|^{\bar{p}+\bar{\delta}}
            \big(3(\bar{p}+\bar{\delta})K_{4}
            +
            \tfrac{|u^{\varepsilon}(s)|^{2}}{1-\varepsilon}\big)\diff{s}
        \\&~+
            (\bar{p}+\bar{\delta})\sqrt{\varepsilon}
            \int_{0}^{t}
                |Y^{\varepsilon,u^{\varepsilon}}(s)|^
                {\bar{p}+\bar{\delta}-2}
            \big\langle Y^{\varepsilon,u^{\varepsilon}}(s),
            \sigma(s,Y^{\varepsilon,u^{\varepsilon}}(s),
            Y^{\varepsilon,u^{\varepsilon}}
            (s-\tau))\diff{W(s)} \big\rangle.
      \end{align*}
      Lemma \ref{lem:stochasticgronwall} with $\tilde{p} = \bar{p}/(\bar{p}+\bar{\delta}) \in (0,1)$, $\nu \in (1,\frac{1}{\tilde{p}})$, $\mu = \frac{\nu}{\nu-1} > 1$ and $\varepsilon \in (0,\frac{1}{2})$ shows
      \begin{align*}
            \E\Big[\sup_{s \in [0,t]}
            |Y^{\varepsilon,u^{\varepsilon}}(s)|^{\bar{p}}\Big]
            \leq&~
            (c_{\tilde{p}\nu}+1)^{1/\nu}
            \Big\{\E\Big[\exp\Big(\tilde{p}\mu
            \int_{0}^{t}
                \big(3(\bar{p}+\bar{\delta})K_{4}
                +
                \tfrac{|u^{\varepsilon}(s)|^{2}}
                {1-\varepsilon}\big)
            \diff{s}\Big)\Big]\Big\}^{1/\mu}
            \\&~\times
            \Big\{\E\Big[\Big(\sup_{s \in [0,t]}
            \Big(|\phi(0)|^{\bar{p}+\bar{\delta}}
            +
            2K_{4}\int_{0}^{s}(1 +
            |Y^{\varepsilon,u^{\varepsilon}}(r-\tau)|
            ^{\bar{p}+\bar{\delta}})
            \diff{r}\Big)\Big)^{\tilde{p}\nu}\Big]\Big\}^{1/\nu}
      \\\leq&~
            (c_{\tilde{p}\nu}+1)^{1/\nu}
            \exp\big(\tilde{p}\big(3(\bar{p}
            + \bar{\delta})K_{4}T
            + 2\alpha\big)\big)
            \Big\{(|\phi(0)|^{\bar{p}+\bar{\delta}}
            + 2K_{4}T)^{\tilde{p}\nu}
        \\&~+
            \E\Big[\Big(2K_{4}
            \int_{0}^{t}
               |Y^{\varepsilon,u^{\varepsilon}}
               (r-\tau)|^{\bar{p}+\bar{\delta}}
            \diff{r}\Big)^{\tilde{p}\nu}\Big]\Big\}^{1/\nu}
      \\\leq&~
            (c_{\tilde{p}\nu}+1)^{1/\nu}
            \exp\big(\tilde{p}\big(3(\bar{p}+\bar{\delta})K_{4}T
            + 2\alpha\big)\big)
            \Big\{(|\phi(0)|^{\bar{p}+\bar{\delta}} + 2K_{4}T)^{\tilde{p}\nu}
            \\&~+ \Big(2K_{4}\int_{0}^{t}
            \E[|Y^{\varepsilon,u^{\varepsilon}}
            (r-\tau)|^{\bar{p}+\bar{\delta}}]
            \diff{r}\Big)^{\tilde{p}\nu}\Big\}^{1/\nu}
      \\\leq&~
            (c_{\tilde{p}\nu}+1)^{1/\nu}
            \exp\big(\tilde{p}
            \big(3(\bar{p}+\bar{\delta})K_{4}T
            + 2\alpha\big)\big)
            \Big\{(|\phi(0)|^{\bar{p} + \bar{\delta}}
            + 2K_{4}T)^{\tilde{p}}
            \\&~+
            \Big(2K_{4}\int_{0}^{t}
            \E[|Y^{\varepsilon,u^{\varepsilon}}
            (r-\tau)|^{\bar{p}+\bar{\delta}}]
            \diff{r}\Big)^{\tilde{p}}\Big\}.
      \end{align*}
      where we have used $\{u^{\varepsilon}\}_{\varepsilon > 0} \subset \mathcal{A}_{\alpha}$, the H\"{o}lder inequality and the Jensen inequality. Hence, for any $\bar{p} \in [2,\eta+1-\bar{\delta}]$, there exists $C := C(K_{4},T,\tau,\delta,\bar{p},\alpha) > 0$ such that
      \begin{equation}\label{eq:317317}
            \E\Big[\sup_{s \in [0,t]}
            |Y^{\varepsilon,u^{\varepsilon}}(s)|^{\bar{p}}\Big]
            \leq
            C\Big(1 + \Big(\int_{0}^{t}
            \E[|Y^{\varepsilon,u^{\varepsilon}}
            (r-\tau)|^{\bar{p}+\bar{\delta}}]\diff{r}\Big)^
            {\bar{p}/(\bar{p}+\bar{\delta})}\Big),
            \quad t \in [0,T].
      \end{equation}
      Now we are in a position to show \eqref{eq:momentboundauxi}. For any given $p \in [2,\eta+1-\delta]$, denote
      \begin{equation*}
            \bar{p}_{i}
            :=
            p + ([T/\tau]+1-i)\bar{\delta},
            \quad i = 1,2,\cdots,[T/\tau]+1
      \end{equation*}
      with $[T/\tau]$ being the integer part of $T/\tau$. Then we have $\bar{p}_{[T/\tau]+1} = p$, $\bar{p}_{i} \in [2,\eta+1-\bar{\delta}]$ for all $i = 1,2,\cdots,[T/\tau]$.
      Moreover, it holds that $\bar{p}_{i+1} + \bar{\delta} = \bar{p}_{i}$ for each $i = 1,2,\cdots,[T/\tau]-1$. For each $i = 1,2,\cdots,[T/\tau]+1$, it follows from \eqref{eq:317317} and  $\bar{p}_{i} \in [2,\eta+1-\bar{\delta}], i = 1,2,\cdots,[T/\tau]$ that
      \begin{equation}\label{eq:318318}
            \E\Big[\sup_{s \in [0,t]}
            |Y^{\varepsilon,u^{\varepsilon}}(s)|^{\bar{p}_{i}}\Big]
            \leq
            C\Big(1 + \Big(\int_{0}^{t}
            \E[|Y^{\varepsilon,u^{\varepsilon}}(r-\tau)|
            ^{\bar{p}_{i}+\bar{\delta}}]\diff{r}\Big)
            ^{\bar{p}_{i}/(\bar{p}_{i}+\bar{\delta})}\Big),
            \quad t \in [0,T].
      \end{equation}
      When $t \in [0,\tau]$, we use $\E[|Y^{\varepsilon,u^{\varepsilon}}(r-\tau)|^{\bar{p}_{1}+\bar{\delta}}] = |\phi(r-\tau)|^{\bar{p}_{1}+\bar{\delta}}
      \leq |\phi|_{C([-\tau,0];\R^{m})}^{\bar{p}_{1}+\bar{\delta}}$ for all $r \in [0,t]$ and \eqref{eq:318318} to obtain
      \begin{equation*}
            \E\Big[\sup_{s \in [0,t]}
            |Y^{\varepsilon,u^{\varepsilon}}(s)|^{\bar{p}_{1}}\Big]
            \leq
            C\big(1 +
            T^{\bar{p}_{1}/(\bar{p}_{1}+\bar{\delta})}|\phi|_{C([-\tau,0];
            \R^{m})}^{\bar{p}_{1}}\big)
      \end{equation*}
      and thus
      \begin{equation*}
      \begin{split}
            \E\Big[\sup_{s \in [-\tau,\tau]}
            |Y^{\varepsilon,u^{\varepsilon}}(s)|^{\bar{p}_{1}}\Big]
            \leq&~
            \E\Big[\sup_{s \in [-\tau,0]}
            |Y^{\varepsilon,u^{\varepsilon}}(s)|^{\bar{p}_{1}}\Big]
            +
            \E\Big[\sup_{s \in [0,\tau]}
            |Y^{\varepsilon,u^{\varepsilon}}(s)|^{\bar{p}_{1}}\Big]
            \\\leq&~
            |\phi|_{C([-\tau,0];\R^{m})}^{\bar{p}_{1}}
            +
            C\big(1 +
            T^{\bar{p}_{1}/(\bar{p}_{1}+\bar{\delta})}|\phi|_{C([-\tau,0];
            \R^{m})}^{\bar{p}_{1}}\big)
            =:C_{\bar{p}_{1}}.
      \end{split}
      \end{equation*}
      For $t \in [0,2\tau]$, applying \eqref{eq:318318}, $\bar{p}_{2}+\bar{\delta} = \bar{p}_{1}$ and the H\"{o}lder inequality gives
      \begin{equation*}
      \begin{split}
            \E\Big[\sup_{s \in [0,t]}
            |Y^{\varepsilon,u^{\varepsilon}}(s)|
            ^{\bar{p}_{2}}\Big]
            \leq&~
            C\Big(1 + \Big(\int_{0}^{t}
            \E[|Y^{\varepsilon,u^{\varepsilon}}(r-\tau)|
            ^{\bar{p}_{2}+\bar{\delta}}]\diff{r}\Big)
            ^{\bar{p}_{2}/(\bar{p}_{2}+\bar{\delta})}\Big)
      \\=&~
            C\Big(1 + \Big(\int_{0}^{t}
            \E[|Y^{\varepsilon,u^{\varepsilon}}(r-\tau)|^{\bar{p}_{1}}]
            \diff{r}\Big)^{\bar{p}_{2}/(\bar{p}_{2}+\bar{\delta})}\Big)
      \\\leq&~
            C\big(1 +
            (TC_{\bar{p}_{1}})
            ^{\bar{p}_{2}/(\bar{p}_{2}+\bar{\delta})}\big)
      \end{split}
      \end{equation*}
      and therefore
      \begin{equation*}
      \begin{split}
            \E\Big[\sup_{s \in [-\tau,2\tau]}
            |Y^{\varepsilon,u^{\varepsilon}}(s)|^{\bar{p}_{2}}\Big]
            \leq&~
            \E\Big[\sup_{s \in [-\tau,0]}
            |Y^{\varepsilon,u^{\varepsilon}}(s)|^{\bar{p}_{2}}\Big]
            +
            \E\Big[\sup_{s \in [0,2\tau]}
            |Y^{\varepsilon,u^{\varepsilon}}(s)|^{\bar{p}_{2}}\Big]
      \\\leq&~
            |\phi|_{C([-\tau,0];\R^{m})}^{\bar{p}_{2}}
            +
            C\big(1 +
            (TC_{\bar{p}_{1}})^{\bar{p}_{2}/
            (\bar{p}_{2}+\bar{\delta})}\big)
            =:C_{\bar{p}_{2}}.
      \end{split}
      \end{equation*}
      Repeating the above procedure, there exists a constant $C_{\bar{p}_{[T/\tau]}} > 0$ such that
      \begin{equation*}
            \E\Big[\sup_{s \in [-\tau,[T/\tau]\tau]}
            |Y^{\varepsilon,u^{\varepsilon}}(s)|
            ^{\bar{p}_{[T/\tau]}}\Big]
            \leq
            C_{\bar{p}_{[T/\tau]}},
      \end{equation*}
      which together with \eqref{eq:318318} yields the required result \eqref{eq:momentboundauxi}. Thus the proof is complete.
\end{proof}

To proceed, we need the following Kolmogorov criterion for the weak relative compactness; see, e.g.,
\cite[Theorem 21.42]{klenke2008probability}.

\begin{lemma}
\label{lem:kolmogorov}
      Let $(X^i,i\in I)$ be a sequence of continuous stochastic process from $\Omega \times [0,+\infty)$ to $\R^d$. Assume that the following conditions are satisfied.
      \begin{itemize}
		    \item [(a)] The family $(\P\circ (X^i(0))^{-1},\,i\in I)$
                      of initial distributions  is tight.
            \item [(b)] There are numbers $C,\beta,\gamma>0$ such that
                      for all $s,t\in[0,\infty)$ and every $i\in I$,
		              $$\E[|X^i(t)-X^i(s)|^{\beta}]
                        \leq
                        C |t-s|^{\gamma+1}.$$
      \end{itemize}
      Then $(\P\circ (X^i)^{-1},\,i\in I)$ is weakly relatively compact in $\mathcal{P}(C([0,\infty);\R^{d}))$, where $\mathcal{P}(C([0,\infty);\R^{d}))$ is the space of all probability measures on $(C([0,\infty);\R^{d}),\mathcal{B}(C([0,\infty);\R^{d})))$.
\end{lemma}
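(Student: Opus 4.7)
The plan is to derive the conclusion from Prokhorov's theorem combined with the Kolmogorov--Chentsov continuity estimate. Since $C([0,\infty);\R^{d})$ equipped with the topology of uniform convergence on compacta is Polish, weak relative compactness of $(\P \circ (X^{i})^{-1})_{i \in I}$ is equivalent to its tightness. By the very definition of the compact-open topology, a family of Borel probability measures on $C([0,\infty);\R^{d})$ is tight if and only if, for each $T > 0$, its pushforward to $C([0,T];\R^{d})$ is tight; a standard countable diagonal procedure over $T = 1,2,\dots$ then glues local tightness into global tightness. I would therefore fix $T > 0$ and focus on producing a compact exhaustion of $C([0,T];\R^{d})$.

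The core step is to upgrade hypothesis (b) into a uniform bound on an $\alpha$-H\"older seminorm. Fix any exponent $\alpha \in (0,\gamma/\beta)$, so that $\alpha \beta - \gamma < 0$. The classical dyadic argument behind Kolmogorov--Chentsov (Chebyshev applied to $|X^{i}(k 2^{-n} T)-X^{i}((k-1) 2^{-n} T)|$, summed over $k$ and then over $n$ using the geometric tail, followed by path continuity to extend from dyadic times to all times) yields a constant $M_{0} = M_{0}(T,\alpha,\beta,\gamma,C) > 0$ such that
\[
      \E\Big[\sup_{\substack{s,t \in [0,T]\\ 0 < |t-s| \leq 1}}
            \frac{|X^{i}(t)-X^{i}(s)|^{\beta}}{|t-s|^{\alpha\beta}}\Big]
      \leq M_{0}, \qquad i \in I.
\]
The essential point is that the estimate depends on $X^{i}$ only through the constant $C$ from (b), which is $i$-independent, so $M_{0}$ is uniform in $i$.

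With this moment bound in hand, I would construct compact sets via Arzel\`a--Ascoli. For $R, M > 0$, set
\[
      K_{R,M}
      :=
      \Big\{f \in C([0,T];\R^{d}) :
      |f(0)| \leq R,~
      \sup_{0 < |t-s| \leq 1} \frac{|f(t)-f(s)|}{|t-s|^{\alpha}} \leq M \Big\}.
\]
This set is closed, pointwise bounded (at $0$), and equicontinuous, hence compact in $C([0,T];\R^{d})$. Given $\eta > 0$, hypothesis (a) provides $R$ with $\sup_{i} \P(|X^{i}(0)| > R) < \eta/2$, and Chebyshev applied to the previous display yields $M$ with $\sup_{i} \P(X^{i} \notin K_{R,M}) < \eta$; this delivers tightness on $C([0,T];\R^{d})$.

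To lift this from each $T$ to $[0,\infty)$, given $\eta > 0$ I would pick for each $k \in \N$ a compact $K_{k} \subset C([0,k];\R^{d})$ with $\sup_{i} \P(X^{i}|_{[0,k]} \notin K_{k}) < \eta 2^{-k}$, and take
\[
      K := \{f \in C([0,\infty);\R^{d}) :
      f|_{[0,k]} \in K_{k} \text{ for every } k \in \N\},
\]
which is compact in the compact-open topology and satisfies $\sup_{i} \P(X^{i} \notin K) \leq \eta$. The main obstacle is the Kolmogorov--Chentsov H\"older estimate with an $i$-uniform constant: one must track carefully that the dyadic bookkeeping produces a bound depending only on $C, \beta, \gamma$ from (b), not on the individual process. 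Once this uniformity is secured, the rest is Arzel\`a--Ascoli and a routine diagonalization.
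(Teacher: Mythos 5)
Your proposal is correct, but note that the paper does not prove this lemma at all: it is quoted verbatim as a known result, citing \cite[Theorem~21.42]{klenke2008probability}, so there is no internal proof to compare against. What you have written is essentially the standard textbook proof of that cited theorem: Prokhorov's theorem on the Polish space $C([0,\infty);\R^{d})$, reduction of tightness to the finite horizons $[0,T]$, the Kolmogorov--Chentsov dyadic chaining argument to get an $i$-uniform moment bound on the $\alpha$-H\"older seminorm for $\alpha\in(0,\gamma/\beta)$, compact sets via Arzel\`a--Ascoli, Chebyshev plus hypothesis (a), and a diagonal gluing over $T=1,2,\dots$. All of these steps are sound; in particular the key uniformity claim is right, since the chaining estimate depends on the process only through $C,\beta,\gamma$ (and $T,\alpha$). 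Two small points worth making explicit if you write this out in full: the statement only assumes $\beta>0$, so for $\beta<1$ you cannot sum the dyadic contributions with the $L^{\beta}$ triangle inequality and should instead use the elementary subadditivity $(\sum_n a_n)^{\beta}\leq\sum_n a_n^{\beta}$, which yields the same condition $\alpha\beta<\gamma$; and the extension of the H\"older bound from dyadic times to all of $[0,T]$ uses the assumed path continuity of the $X^{i}$, which you should invoke explicitly. With those remarks your argument is a complete and correct self-contained proof, and in substance it buys nothing different from the external reference the paper relies on.
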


The forthcoming result indicates that the maps $F$ and $F^{\varepsilon}$, described by \eqref{eq:mapF} and \eqref{eq:mapFvarepsilon}, satisfy the second condition in Lemma \ref{lem:sufficientlemma}.
\begin{lemma}\label{lem:ldpresult}
      Suppose that Assumption \ref{eq:mainass} holds with $\eta > 2q-1$.
      If $u \in \mathcal{A}_{\alpha}$, $\{u^{\varepsilon}\}_{\varepsilon > 0} \subset \mathcal{A}_{\alpha}$ for some $\alpha \in (0,\infty)$ satisfies $u^{\varepsilon} \xrightarrow[\varepsilon \to 0]{d} u$ as $S_{\alpha}$-valued random variables, then
      \begin{equation}\label{eq:FvarF}
            F^{\varepsilon}\Big(W + \frac{1}{\sqrt{\varepsilon}} \int_{0}^{\cdot} u^{\varepsilon}(s) \diff{s}\Big)
            \xrightarrow[\varepsilon \to 0]{d}
            F\Big(\int_{0}^{\cdot} u(s) \diff{s}\Big).
      \end{equation}
\end{lemma}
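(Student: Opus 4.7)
The plan is to reduce \eqref{eq:FvarF} to showing $Y^{\varepsilon,u^{\varepsilon}}-z^{u^{\varepsilon}}\to 0$ in probability in $C_{\phi}([-\tau,T];\R^{d})$. Since $z^{u^{\varepsilon}}=(F\circ G)(u^{\varepsilon})$ and the continuity of $F\circ G\colon S_{\alpha}\to C_{\phi}([-\tau,T];\R^{d})$ was established inside the proof of Lemma \ref{lem:ratefunction}, the continuous mapping theorem combined with $u^{\varepsilon}\xrightarrow[\varepsilon\to 0]{d}u$ gives $z^{u^{\varepsilon}}\xrightarrow[\varepsilon\to 0]{d}z^{u}$. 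A Slutsky-type argument then yields $Y^{\varepsilon,u^{\varepsilon}}\xrightarrow[\varepsilon\to 0]{d}z^{u}=F(\int_{0}^{\cdot}u(s)\diff s)$, which is exactly \eqref{eq:FvarF}.

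To prove the convergence in probability, set $V^{\varepsilon}(t):=|Y^{\varepsilon,u^{\varepsilon}}(t)-z^{u^{\varepsilon}}(t)|^{2}$ and subtract \eqref{eq:skeletoneq} (with $\varphi=u^{\varepsilon}$) from \eqref{eq:Yvaruvart}. Applying It\^o's formula, the monotonicity condition \eqref{eq:couplemonocond}, and the weighted Young inequality $2\langle Y-z,(\sigma(Y)-\sigma(z))u^{\varepsilon}\rangle\leq 2\eta|\sigma(Y)-\sigma(z)|^{2}+\frac{1}{2\eta}|Y-z|^{2}|u^{\varepsilon}|^{2}$ to cancel the $\eta$-contribution yields
\begin{equation*}
V^{\varepsilon}(t)\leq\int_{0}^{t}V^{\varepsilon}(s)\Big(2K_{2}+\tfrac{|u^{\varepsilon}(s)|^{2}}{2\eta}\Big)\diff s+H^{\varepsilon}(t)+M^{\varepsilon}(t),
\end{equation*}
where $H^{\varepsilon}(t):=2K_{2}\int_{0}^{t}V^{\varepsilon}(s-\tau)\diff s+\varepsilon\int_{0}^{t}|\sigma(s,Y^{\varepsilon,u^{\varepsilon}}(s),Y^{\varepsilon,u^{\varepsilon}}(s-\tau))|^{2}\diff s$ and $M^{\varepsilon}$ is a continuous local martingale starting at zero. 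The key structural observation is that $\int_{0}^{T}(2K_{2}+|u^{\varepsilon}|^{2}/(2\eta))\diff s\leq 2K_{2}T+\alpha/(2\eta)$ holds pathwise since $u^{\varepsilon}\in\mathcal{A}_{\alpha}$, so the exponential factor in the stochastic Gronwall lemma (Lemma \ref{lem:stochasticgronwall}) is uniformly bounded in $\varepsilon$ and $\omega$.

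I would then iterate this estimate over the delay mesh $[0,\tau],[0,2\tau],\ldots,[0,T]$. On $[0,\tau]$ the delay contribution vanishes because both processes coincide with $\phi$ on $[-\tau,0]$, and \eqref{eq:bsigmasuperlinear} combined with Lemma \ref{lem:momentbound} (whose admissible moment range $[2,\eta+1)$ contains $2q$ thanks to $\eta>2q-1$) gives $\E\sup_{s\leq\tau}H^{\varepsilon}(s)^{\tilde p_{1}\nu}\leq C\varepsilon^{\tilde p_{1}\nu}$; Lemma \ref{lem:stochasticgronwall} then produces $\E\sup_{s\leq\tau}V^{\varepsilon}(s)^{\tilde p_{1}}\leq C\varepsilon^{\tilde p_{1}}$ for any $\tilde p_{1}\in(0,1)$ and $\nu>1$ with $\tilde p_{1}\nu<1$. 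To pass to $[0,2\tau]$, one uses $\int_{0}^{2\tau}V^{\varepsilon}(s-\tau)\diff s\leq\tau\sup_{s\leq\tau}V^{\varepsilon}(s)$ and sets $\tilde p_{2}:=\tilde p_{1}/\nu$; Jensen's inequality gives $\E(\sup_{s\leq\tau}V^{\varepsilon}(s))^{\tilde p_{2}\nu}\leq C\varepsilon^{\tilde p_{1}}=C\varepsilon^{\tilde p_{2}\nu}$, so a second application of Lemma \ref{lem:stochasticgronwall} yields $\E\sup_{s\leq 2\tau}V^{\varepsilon}(s)^{\tilde p_{2}}\leq C\varepsilon^{\tilde p_{2}}$. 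Repeating $\lceil T/\tau\rceil$ times with the shrinking but still positive sequence $\tilde p_{k}:=\tilde p_{1}/\nu^{k-1}$ produces $\E\sup_{s\leq T}V^{\varepsilon}(s)^{\tilde p_{N}}\leq C\varepsilon^{\tilde p_{N}}$, whence $V^{\varepsilon}\to 0$ in probability.

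The main obstacle is the coexistence of the super-linear $|\sigma|^{2}$-term with the only $L^{2}$-in-time control $u^{\varepsilon}$: a classical deterministic Gronwall would demand the expected exponential $\E\exp(C\int_{0}^{T}|u^{\varepsilon}|^{2}\diff s)$, for which no uniform-in-$\varepsilon$ bound is available, and a direct estimate on $\E\int V^{\varepsilon}|u^{\varepsilon}|^{2}\diff s$ cannot be closed because $\alpha$ need not be small. Lemma \ref{lem:stochasticgronwall} bypasses this by trading the expected exponential for the pathwise bound on $\int\psi$ afforded by $u^{\varepsilon}\in\mathcal{A}_{\alpha}$, while the delay iteration transmits the $\varepsilon$-smallness through finitely many stages at the benign cost of a progressively smaller moment exponent.
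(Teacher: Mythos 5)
Your proposal is correct, but it takes a genuinely different route from the paper. The paper never compares $Y^{\varepsilon,u^{\varepsilon}}$ with the random skeleton $z^{u^{\varepsilon}}$: it proves tightness of $\{(Y^{\varepsilon,u^{\varepsilon}},u^{\varepsilon})\}_{\varepsilon>0}$ in $C([-\tau,T];\R^{d})\times S_{\alpha}$ via the Kolmogorov criterion (Lemma \ref{lem:kolmogorov}) with $\beta\in(2,\tfrac{\eta+1}{q})$ and the uniform moments of Lemma \ref{lem:momentbound} (this is where $\eta>2q-1$ enters there), extracts a weak limit $(\widetilde{X},\widetilde{u})$ by Prokhorov and the Skorokhod representation, identifies $\widetilde{X}=z^{\widetilde{u}}$ by testing against the bounded continuous functional $\Psi_{t}$ and showing the only discrepancy, the stochastic integral, is $O(\sqrt{\varepsilon_{n}})$ in $L^{1}$, and finally upgrades the subsequential statement to the full family by a sub-subsequence argument. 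You instead prove the quantitative statement $\sup_{t\in[0,T]}|Y^{\varepsilon,u^{\varepsilon}}(t)-z^{u^{\varepsilon}}(t)|\to 0$ in probability via It\^{o}'s formula, the monotonicity condition \eqref{eq:couplemonocond} with the weighted Young inequality absorbing the $\eta$-term, the stochastic Gronwall lemma (Lemma \ref{lem:stochasticgronwall}), whose exponential factor is controlled pathwise because $u^{\varepsilon}\in\mathcal{A}_{\alpha}$, and a delay iteration with shrinking exponents $\tilde{p}_{k}=\tilde{p}_{1}/\nu^{k-1}$; you then conclude with the continuity of $F\circ G$ on $S_{\alpha}$ (already established inside the proof of Lemma \ref{lem:ratefunction}), the continuous mapping theorem and a converging-together (Slutsky) argument. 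Both routes use $\eta>2q-1$ only through moment bounds of order (slightly above) $2q$ supplied by Lemma \ref{lem:momentbound}. Your route avoids the Skorokhod representation and the subsequence extraction, reuses the Gronwall-plus-iteration machinery of Lemma \ref{lem:momentbound}, and even gives a rate (a small power of $\varepsilon$) for the gap between the controlled process and its skeleton; the paper's route is softer, needing no second Gronwall iteration and no pathwise comparison, only tightness and identification of limit points. The only points you should spell out are routine: $z^{u^{\varepsilon}}$ is adapted and of finite variation, so It\^{o}'s formula applies to $|Y^{\varepsilon,u^{\varepsilon}}-z^{u^{\varepsilon}}|^{2}$ and $M^{\varepsilon}$ is indeed a continuous local martingale, and $S_{\alpha}$ with the weak topology is compact metrizable, so the continuous mapping theorem and the converging-together lemma are applicable in this setting.
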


\begin{proof}
      For a given $\alpha \in (0,\infty)$, let $u \in \mathcal{A}_{\alpha}$, $\{u^{\varepsilon}\}_{\varepsilon > 0} \subset \mathcal{A}_{\alpha}$ satisfy $u^{\varepsilon} \xrightarrow[\varepsilon \to 0]{d} u$. Notice that
      $F(\int_{0}^{\cdot} u(s) \diff{s})$ is the solution of
      \begin{equation}
      \begin{split}
            \diff{z^{u}(t)}
            =&~
            b(t,z^{u}(t),z^{u}(t-\tau))\diff{t}
            +
            \sigma(t,z^{u}(t),z^{u}(t-\tau))u(t)\diff{t},
            \quad t \in (0,T],
            \\
            z^{u}(t) =&~ \phi(t), \quad t \in [-\tau,0].
      \end{split}
      \end{equation}
      It is equivalent to show $Y^{\varepsilon,u^{\varepsilon}} \xrightarrow[\varepsilon \to 0]{d} z^{u}$ in view of \eqref{eq:Yvaruvartttt}.

      For this purpose, we first verify that $\{(Y^{\varepsilon,u^{\varepsilon}},u^{\varepsilon})\}_{\varepsilon > 0}$ is tight as a family of random variables taking values in the Polish space $C([-\tau,T];\R^{d}) \times S_{\alpha}$. Since $S_{\alpha}$ is compact, we know that $\{u^{\varepsilon}\}_{\varepsilon > 0}$ is tight as a family of $S_{\alpha}$-valued random variables. Since $S_{\alpha}$ and $C([-\tau,T];\R^{d})$ are Polish spaces, we need to validate that $\{Y^{\varepsilon,u^{\varepsilon}}\}_{\varepsilon > 0}$ is tight as a family of $C([-\tau,T];\R^{d})$-valued random variables. In fact, $Y^{\varepsilon,u^{\varepsilon}}(0) = \phi(0)$ automatically yields the tightness of the initial distributions $\{\P \circ (Y^{\varepsilon,u^{\varepsilon}}(0))^{-1}\}_{\varepsilon > 0}$. To show the second condition of Lemma \ref{lem:kolmogorov}, we use \eqref{eq:Yvaruvart}, the H\"{o}lder inequality and
      the Burkholder--Davis--Gundy inequality to obtain that for any $\beta \in (2,\frac{\eta+1}{q})$ and $s,t \in [0,T]$ with $s \leq t$,
      \begin{align*}
            \E\big[|Y^{\varepsilon,u^{\varepsilon}}(t)
            &~-Y^{\varepsilon,u^{\varepsilon}}(s)|^{\beta}\big]
      \leq
            3^{\beta-1}\E\Big[\Big|\int_{s}^{t}
            b(r,Y^{\varepsilon,u^{\varepsilon}}(r),
            Y^{\varepsilon,u^{\varepsilon}}(r-\tau))
            \diff{r}\Big|^{\beta}\Big]
            \\&~+
            3^{\beta-1}\E\Big[\Big|\int_{s}^{t}
            \sigma(r,Y^{\varepsilon,u^{\varepsilon}}(r),
            Y^{\varepsilon,u^{\varepsilon}}(r-\tau))
            u^{\varepsilon}(r)
            \diff{r}\Big|^{\beta}\Big]
            \\&~+
            3^{\beta-1}\E\Big[\Big|\int_{s}^{t}
            \sqrt{\varepsilon}
            \sigma(r,Y^{\varepsilon,u^{\varepsilon}}(r),
            Y^{\varepsilon,u^{\varepsilon}}(r-\tau))
            \diff{W(r)}\Big|^{\beta}\Big]
      \\\leq&~
            (3(t-s))^{\beta-1}\int_{s}^{t}
            \E\big[|b(r,Y^{\varepsilon,u^{\varepsilon}}(r),
            Y^{\varepsilon,u^{\varepsilon}}(r-\tau))
            |^{\beta}\big]\diff{r}
            \\&~+
            3^{\beta-1}\E\Big[\Big(\int_{s}^{t}
            |\sigma(r,Y^{\varepsilon,u^{\varepsilon}}(r),
            Y^{\varepsilon,u^{\varepsilon}}(r-\tau))|^{2}
            \diff{r}\Big)^{\frac{\beta}{2}}
            \Big(\int_{s}^{t}
            |u^{\varepsilon}(r)|^{2}
            \diff{r}\Big)^{\frac{\beta}{2}}\Big]
            \\&~+
            3^{\beta-1}
            \Big(\frac{\beta(\beta-1)}{2}\Big)^{\frac{\beta}{2}}
            (t-s)^{\frac{\beta-2}{2}}
            \int_{s}^{t}\E\big[|\sqrt{\varepsilon}
            \sigma(r,Y^{\varepsilon,u^{\varepsilon}}(r),
            Y^{\varepsilon,u^{\varepsilon}}(r-\tau))
            |^{\beta}\big]\diff{r}.
      \end{align*}
      Applying the H\"{o}lder inequality, \eqref{eq:bsigmasuperlinear} and Lemma \ref{lem:momentbound} leads to
      \begin{align*}
            \E\big[|Y^{\varepsilon,u^{\varepsilon}}(t)
            &~-Y^{\varepsilon,u^{\varepsilon}}(s)|^{\beta}\big]
            \leq
            C(t-s)^{\beta-1}\int_{s}^{t}
            \E\big[|b(r,Y^{\varepsilon,u^{\varepsilon}}(r),
            Y^{\varepsilon,u^{\varepsilon}}(r-\tau))|
            ^{\beta}\big]\diff{r}
            \\&~+
            C(1+\varepsilon^{\frac{\beta}{2}})(t-s)
            ^{\frac{\beta-2}{2}}\int_{s}^{t}
            \E\big[|\sigma(r,Y^{\varepsilon,u^{\varepsilon}}(r),
            Y^{\varepsilon,u^{\varepsilon}}(r-\tau))|
            ^{\beta}\big]\diff{r}
      \\\leq&~
            C(t-s)^{\beta-1}\int_{s}^{t}
            \big(1
            +
            \E[|Y^{\varepsilon,u^{\varepsilon}}(r)|^{q\beta}]
            +
            \E[|Y^{\varepsilon,u^{\varepsilon}}(r-\tau))|
            ^{q\beta}]\big)\diff{r}
            \\&~+
            C(t-s)^{\frac{\beta-2}{2}}\int_{s}^{t}
            \big(1
            +
            \E[|Y^{\varepsilon,u^{\varepsilon}}(r)|^{q\beta}]
            +
            \E[|Y^{\varepsilon,u^{\varepsilon}}
            (r-\tau))|^{q\beta}]\big)
            \diff{r}
      \\\leq&~
            C(t-s)^{\frac{\beta}{2}}
            =
            C(t-s)^{(\frac{\beta}{2}-1)+1},
            \quad
            \beta \in (2,\tfrac{\eta+1}{q}),
      \end{align*}
      which together with Lemma \ref{lem:kolmogorov} shows that $\{Y^{\varepsilon,u^{\varepsilon}}\}_{\varepsilon > 0}$ is tight as a family of $C([-\tau,T];\R^{d})$-valued random variables. Thus $\{(Y^{\varepsilon,u^{\varepsilon}},u^{\varepsilon})\}_{\varepsilon > 0}$ is tight as a family of random variables taking values in the Polish space $C([-\tau,T];\R^{d}) \times S_{\alpha}$. It follows from the Prokhorov theorem (see, e.g., \cite[Theorem A.3.15]{dupuis1997weak}) that $\{(Y^{\varepsilon,u^{\varepsilon}},u^{\varepsilon})\}_{\varepsilon > 0}$ is weakly relatively compact in $C([-\tau,T];\R^{d}) \times S_{\alpha}$, which means that there exists a subsequence $\varepsilon_{n} \to 0$ (as $n \to \infty$) such that $\{(Y^{\varepsilon_{n},u^{\varepsilon_{n}}}, u^{\varepsilon_{n}})\}_{\varepsilon_{n} > 0}$ converges in distribution to an element with values in $C([-\tau,T];\R^{d}) \times S_{\alpha}$. According to the Skorohod representation theorem (see, e.g., \cite[Theorem A.3.9]{dupuis1997weak}), there exists a probability space $(\widetilde{\Omega},\widetilde{\F},\widetilde{\P})$ on which a $C([-\tau,T];\R^{d}) \times S_{\alpha}$-valued random variable $(\widetilde{X},\widetilde{u})$ is such that $\{(Y^{\varepsilon_{n},u^{\varepsilon_{n}}}, u^{\varepsilon_{n}})\}_{\varepsilon_{n} > 0}$ converges in distribution to $(\widetilde{X},\widetilde{u})$.

      Denote by $\E_{\widetilde{\P}}$ the expectation with respect to the probability measure $\widetilde{\P}$. Next we show that $\widetilde{X}$ satisfies
      \begin{equation}\label{eq:widetildeXt}
            \widetilde{X}(t) = \phi(0)
            +
            \int_{0}^{t}
                b(s,\widetilde{X}(s),\widetilde{X}(s-\tau))
            \diff{s}
            +
            \int_{0}^{t}
                \sigma(s,\widetilde{X}(s),\widetilde{X}(s-\tau))
                \widetilde{u}(s)
            \diff{s},
      \quad t \in [0,T],\widetilde{\P}\text{-a.s.}
      \end{equation}
      To this end, for each given $t \in [0,T]$, we define the map $\Psi_{t} \colon C([-\tau,T];\R^{d}) \times S_{\alpha} \to [0,1]$ by
      $$\Psi_{t}(f,\varphi) :=
      1 \wedge \Big|f(t)-\phi(0)- \int_{0}^{t} b(s,f(s),f(s-\tau))\diff{s}
      - \int_{0}^{t} \sigma(s,f(s),f(s-\tau))\varphi(s)\diff{s}\Big|.$$
      We claim that $\Psi_{t}$ is continuous. Actually,
      let $f_{n} \to f$ in $C([-\tau,T];\R^{d})$ and $\varphi_{n} \to \varphi$ in $S_{\alpha}$ with respect to the weak topology of $L^{2}([0,T];\R^{m})$. Noting that the elementary inequality
      $\big|1 \wedge |a| - 1 \wedge |b|\big| \leq 1 \wedge |a-b| \leq |a-b|$ holds for all $a,b \in \R$, we use the H\"{o}lder inequality, $\{\varphi_{n}\}_{n \in \N} \subset S_{\alpha}$,
      \eqref{eq:bpolynomialgrow} and \eqref{eq:sigmapolynomialgrow} to obtain
      \begin{align*}
            |\Psi_{t}(f_{n},\varphi_{n})
            &~-\Psi_{t}(f,\varphi)|
      \leq
            |f_{n}(t)-f(t)|
            +
            \int_{0}^{t} |b(s,f_{n}(s),f_{n}(s-\tau))
            -b(s,f(s),f(s-\tau))|\diff{s}
            \\&~+
            \Big(\int_{0}^{t}
            \big|\sigma(s,f_{n}(s),f_{n}(s-\tau)) -\sigma(s,f(s),f(s-\tau))\big|^{2}
            \diff{s}\Big)^{\frac{1}{2}}
            \Big(\int_{0}^{t} |\varphi_{n}(s)|^{2}
            \diff{s}\Big)^{\frac{1}{2}}
            \\&~+
            \Big|\int_{0}^{t}\sigma(s,f(s),f(s-\tau))
            (\varphi_{n}(s)-\varphi(s))\diff{s}\Big|
      \\\leq&~
            |f_{n}-f|_{C([-\tau,T];\R^{d})}
            +
            C\int_{0}^{t}
                 \big(|f_{n}(s)-f(s)|
                 + |f_{n}(s-\tau)-f(s-\tau)|\big)
                 \\&~\times
                 \big(1 + |f_{n}(s)|^{q-1}
                  + |f_{n}(s-\tau)|^{q-1}
                  + |f(s)|^{q-1}
                  + |f(s-\tau)|^{q-1}\big)
            \diff{s}
            \\&~+
            \sqrt{\alpha}C
            \Big(\int_{0}^{t}
                     \big(|f_{n}(s)-f(s)|
                     + |f_{n}(s-\tau)-f(s-\tau)|\big)^{2}
                 \\&~\times
                     \big(1 + |f_{n}(s)|^{q-1}
                     + |f_{n}(s-\tau)|^{q-1}
                     + |f(s)|^{q-1}
                     + |f(s-\tau)|^{q-1}\big)^{2}
                 \diff{s}\Big)^{\frac{1}{2}}
            \\&~+
            \Big|\int_{0}^{t}
                     \sigma(s,f(s),f(s-\tau))
                     (\varphi_{n}(s)-\varphi(s))
                 \diff{s}\Big|
      \\\leq&~
            |f_{n}-f|_{C([-\tau,T];\R^{d})}
            +
            C(T+\sqrt{T\alpha})|f_{n}-f|_{C([-\tau,T];\R^{d})}
            \\&~+
            \Big|\int_{0}^{t}\sigma(s,f(s),f(s-\tau))
            (\varphi_{n}(s)-\varphi(s))\diff{s}\Big|,
      \end{align*}
      where we have used the boundedness of $\{|f_{n}|_{C([-\tau,T];\R^{d})}\}_{n \in \N}$. Similar to the proof of \eqref{eq:410410410}, one can use $\varphi_{n} \to \varphi$ in $S_{\alpha}$ to show $\sigma(\cdot,f(\cdot),f(\cdot-\tau))\varphi_{n}(\cdot) \to
      \sigma(\cdot,f(\cdot),f(\cdot-\tau))\varphi(\cdot))$
      with respect to the weak topology in $L^{2}([0,T];\R^{d})$ as $n \to \infty$, which along with Lemma \ref{lem:compactG} implies
      $$\lim_{n \to \infty}\Big|\int_{0}^{\cdot}\sigma(s,f(s),f(s-\tau))
      (\varphi_{n}(s)-\varphi(s))\diff{s}\Big|_{C([0,T];\R^{d})} = 0.$$
      It follows that $\Psi_{t}$ is continuous. Since $\{(Y^{\varepsilon_{n},u^{\varepsilon_{n}}},
      u^{\varepsilon_{n}})\}_{\varepsilon_{n} > 0}$ converges in distribution to $(\widetilde{X},\widetilde{u})$ and $\Psi_{t}$ is bounded and continuous, we have
      \begin{equation*}
            \lim_{n \to \infty}
            \E[\Psi_{t}(Y^{\varepsilon_{n},
            u^{\varepsilon_{n}}},u^{\varepsilon_{n}})]
            =
            \E_{\widetilde{\P}}
            [\Psi_{t}(\widetilde{X},\widetilde{u})].
      \end{equation*}
      By \eqref{eq:Yvaruvart}, the H\"{o}lder inequality,
      It\^{o} isometry, \eqref{eq:bsigmasuperlinear} and Lemma \ref{lem:momentbound},
      \begin{align*}
            \E[\Psi_{t}(Y^{\varepsilon_{n},
            u^{\varepsilon_{n}}},
            u^{\varepsilon_{n}})]
            =&~
            1 \wedge \E\Big[\Big|
            \int_{0}^{t}\sqrt{\varepsilon_{n}}
            \sigma(s,Y^{\varepsilon_{n},
                     u^{\varepsilon_{n}}}(s),
                     Y^{\varepsilon_{n},
                     u^{\varepsilon_{n}}}(s-\tau))
            \diff{W(s)}\Big|\Big]
      \\\leq&~
            \Big(\E\Big[\Big|
            \int_{0}^{t}\sqrt{\varepsilon_{n}}
            \sigma(s,Y^{\varepsilon_{n},
                     u^{\varepsilon_{n}}}(s),
                     Y^{\varepsilon_{n},
                     u^{\varepsilon_{n}}}(s-\tau))
            \diff{W(s)}\Big|^{2}\Big]
            \Big)^{\frac{1}{2}}
      \\=&~
            \sqrt{\varepsilon_{n}}\Big(
            \int_{0}^{t}\E\big[|
            \sigma(s,Y^{\varepsilon_{n},
                     u^{\varepsilon_{n}}}(s),
                     Y^{\varepsilon_{n},
                     u^{\varepsilon_{n}}}(s-\tau))|^{2}
            \big]\diff{s}
            \Big)^{\frac{1}{2}}
      \\\leq&~
            K_{6}\sqrt{\varepsilon_{n}}
            \Big(
            \int_{0}^{t}1
                +
                \E[|Y^{\varepsilon_{n},
                u^{\varepsilon_{n}}}(s)|^{2q}]
                +
                \E[|Y^{\varepsilon_{n},
                u^{\varepsilon_{n}}}(s-\tau))|^{2q}]
            \diff{s}
            \Big)^{\frac{1}{2}}
      \\\leq&~
            K_{6}C\sqrt{T\varepsilon_{n}},
      \end{align*}
      which results in
      \begin{equation*}
            \lim_{n \to \infty}
            \E[\Psi_{t}(Y^{\varepsilon_{n},
            u^{\varepsilon_{n}}},u^{\varepsilon_{n}})]
            =
            \E_{\widetilde{\P}}[\Psi_{t}(\widetilde{X},\widetilde{u})]
            =
            0.
      \end{equation*}
      According to the definition of $\Psi_{t}$, $\widetilde{X}$ satisfies \eqref{eq:widetildeXt} $\widetilde{\P}$-almost surely for all $t \in [0,T]$. Since $\widetilde{X}$ has continuous paths, it follows that $\widetilde{X}$ satisfies \eqref{eq:widetildeXt} for all $t \in [0,T]$ $\widetilde{\P}$-almost surely. That is to say, we have
      \begin{equation*}
            \widetilde{X}
            =
            F\Big(\int_{0}^{\cdot}
            \widetilde{u}(s) \diff{s}\Big)
            =
            z^{\widetilde{u}},
            \quad
            \widetilde{\P}\text{-a.s.}
      \end{equation*}
      Since $(Y^{\varepsilon_{n},u^{\varepsilon_{n}}},
      u^{\varepsilon_{n}}) \xrightarrow[\varepsilon_{n} \to 0]{d} (\widetilde{X},\widetilde{u})$, then $u^{\varepsilon_{n}} \xrightarrow[\varepsilon_{n} \to 0]{d} \widetilde{u}$. This together with $u^{\varepsilon} \xrightarrow[\varepsilon \to 0]{d} u$ shows that $\widetilde{u} \overset{d}{=} u$ and consequently $z^{\widetilde{u}} \overset{d}{=} z^{u}$. Therefore,
      $$
            (Y^{\varepsilon_{n},u^{\varepsilon_{n}}},
            u^{\varepsilon_{n}})
            \xrightarrow[\varepsilon_{n} \to 0]{d}
            (z^{u},u).
      $$
      Repeating the above procedure, one has that for any subsequence $\varepsilon_{m} \to 0$ (as $m \to \infty$), there exists some subsubsequence $\varepsilon_{m_{k}} \to 0$ such that
      $$
            (Y^{\varepsilon_{m_{k}},u^{\varepsilon_{m_{k}}}},
            u^{\varepsilon_{m_{k}}})
            \xrightarrow[\varepsilon_{m_{k}} \to 0]{d}
            (z^{u},u),
      $$
      which finally implies
      $(Y^{\varepsilon,u^{\varepsilon}},u^{\varepsilon})
      \xrightarrow[\varepsilon \to 0]{d} (z^{u},u)
      $.
      Thus the proof is complete.
\end{proof}

Now we are in a position to present the proof of our main result in Theorem \ref{thm:ldpresult}.

\begin{proof}[Proof of Theorem \ref{thm:ldpresult}]
       For each $\alpha \in (0,\infty)$, the compactness of the set
       $\{z^{\varphi} | \varphi \in S_{\alpha}\}$ in $C_{\phi}([-\tau,T];\R^{d})$ is due to Lemma \ref{lem:ratefunction}.
       Assume that $\{u^{\varepsilon}\}_{\varepsilon > 0} \subset \mathcal{A}_{\alpha}$ for some $\alpha \in (0,\infty)$ and $u^{\varepsilon} \xrightarrow[\varepsilon \to 0]{d} u$ as $S_{\alpha}$-valued random variables, then
       $Y^{\varepsilon,u^{\varepsilon}} \xrightarrow[\varepsilon \to 0]{d} z^{u}$ follows from Lemma \ref{lem:ldpresult}.
       By Lemma \ref{lem:sufficientlemma}, we immediately complete the proof.
\end{proof}


\end{document}